\newcommand{\dist}{\operatorname{dist}}
\newcommand{\RE}{\operatorname{Re}}
\newcommand{\GL}{\operatorname{GL}}
\newcommand{\SL}{\operatorname{SL}}
\begin{document}

\newtheorem{thm}{Theorem}
\newtheorem{lem}[thm]{Lemma}
\newtheorem{claim}[thm]{Claim}
\newtheorem{cor}[thm]{Corollary}
\newtheorem{prop}[thm]{Proposition} 
\newtheorem{definition}[thm]{Definition}
\newtheorem{question}[thm]{Open Question}
\newtheorem{conj}[thm]{Conjecture}
\newtheorem{prob}{Problem}

\theoremstyle{remark}
\newtheorem{rem}[thm]{Remark}

\newcommand{\hh}{{{\mathrm h}}}

\numberwithin{equation}{section}
\numberwithin{thm}{section}
\numberwithin{table}{section}
\numberwithin{figure}{section}

\def\sssum{\mathop{\sum\!\sum\!\sum}}
\def\ssum{\mathop{\sum\ldots \sum}}
\def\dsum{\mathop{\sum \, \sum}}
\def\iint{\mathop{\int\ldots \int}}

\newcommand{\diam}{\operatorname{diam}}


\newfont{\teneufm}{eufm10}
\newfont{\seveneufm}{eufm7}
\newfont{\fiveeufm}{eufm5}
%
%
\newfam\eufmfam
     \textfont\eufmfam=\teneufm
\scriptfont\eufmfam=\seveneufm
     \scriptscriptfont\eufmfam=\fiveeufm
%
%
\def\frak#1{{\fam\eufmfam\relax#1}}

\newcommand{\bflambda}{{\boldsymbol{\lambda}}}
\newcommand{\bfmu}{{\boldsymbol{\mu}}}
\newcommand{\bfxi}{{\boldsymbol{\eta}}}
\newcommand{\bfrho}{{\boldsymbol{\rho}}}

\def\eps{\varepsilon}

\def\fK{{\mathcal K}}
\def\fI{\mathfrak I}
\def\fT{\mathfrak{T}}
\def\fL{\mathfrak L}
\def\fR{\mathfrak R}

\def\fA{{\mathfrak A}}
\def\fB{{\mathfrak B}}
\def\fC{{\mathfrak C}}
\def\fD{{\mathfrak D}}
\def\fM{{\mathfrak M}}
\def\fQ{{\mathfrak  Q}}
\def\fS{{\mathfrak  S}}
\def\fU{{\mathfrak U}}
\def\fW{{\mathfrak W}}

\def\T {\mathsf {T}}
\def\Tor{\mathsf{T}_d}
\def\Tore{\widetilde{\mathrm{T}}_{d} }

\def\sM {\mathsf {M}}

\def\ss{\mathsf {s}}

\def\Kmnd{\cK_d(m,n)}
\def\Kmnp{\cK_p(m,n)}
\def\Kmnq{\cK_q(m,n)}

 \def \Aprime{C} 
\def \balpha{\bm{\alpha}}
\def \bbeta{\bm{\beta}}
\def \bgamma{\bm{\gamma}}
\def \bdelta{\bm{\delta}}
\def \bzeta{\bm{\zeta}}
\def \blambda{\bm{\lambda}}
\def \bchi{\bm{\chi}}
\def \bxi{\bm{\xi}}
\def \bphi{\bm{\varphi}}
\def \bpsi{\bm{\psi}}
\def \bnu{\bm{\nu}}
\def \bomega{\bm{\omega}}

\def \bell{\bm{\ell}}

\def\vec#1{\mathbf{#1}}

\newcommand{\abs}[1]{\left| #1 \right|}

\def\dimens{d}
\def\Zq{\mathbb{Z}_q}
\def\Zqx{\mathbb{Z}_q^*}
\def\Zd{\mathbb{Z}_d}
\def\Zdx{\mathbb{Z}_d^*}
\def\Zf{\mathbb{Z}_f}
\def\Zfx{\mathbb{Z}_f^*}
\def\Zp{\mathbb{Z}_p}
\def\Zpx{\mathbb{Z}_p^*}
\def\cM{\mathcal M}
\def\cE{\mathcal E}
\def\cH{\mathcal H}

\def\sfB{\mathrm {B}}
\def\sfC{\mathsf {C}}
\def\sfU{\mathrm {U}}
\def\L{\mathsf {L}}
\def\FF{\mathsf {F}}

\def\sE {\mathscr{E}}
\def\sK {\mathscr{K}}
\def\sR {\mathscr{R}}
\def\sS {\mathscr{S}}

\def\cA{{\mathcal A}}
\def\cB{{\mathcal B}}
\def\cC{{\mathcal C}}
\def\cD{{\mathcal D}}
\def\cE{{\mathcal E}}
\def\cF{{\mathcal F}}
\def\cG{{\mathcal G}}
\def\cH{{\mathcal H}}
\def\cI{{\mathcal I}}
\def\cJ{{\mathcal J}}
\def\cK{S}
\def\cL{{\mathcal L}}
\def\cM{{\mathcal M}}
\def\cO{{\mathcal O}}
\def\cP{{\mathcal P}}
\def\cQ{{\mathcal Q}}
\def\cR{{\mathcal R}}
\def\cS{{\mathcal S}}
\def\cT{{\mathcal T}}
\def\cU{{\mathcal U}}
\def\cV{{\mathcal V}}
\def\cW{{\mathcal W}}
\def\cX{{\mathcal X}}
\def\cY{{\mathcal Y}}
\def\cZ{{\mathcal Z}}
\newcommand{\rmod}[1]{\: \mbox{mod} \: #1}

\def\cg{{\mathcal g}}

\def\vy{\mathbf y}
\def\vr{\mathbf r}
\def\vx{\mathbf x}
\def\va{\mathbf a}
\def\vb{\mathbf b}
\def\vc{\mathbf c}
\def\ve{\mathbf e}
\def\vh{\mathbf h}
\def\vk{\mathbf k}
\def\vm{\mathbf m}
\def\vz{\mathbf z}
\def\vu{\mathbf u}
\def\vv{\mathbf v}

\def\e{{{e}}}
\def\ep{{\mathbf{\,e}}_p}
\def\eq{{\mathbf{\,e}}_q}

\def\Tr{{\mathrm{Tr}}}
\def\Nm{{\mathrm{Nm}}}

 \def\SS{{\mathbf{S}}}

\def\lcm{{\mathrm{lcm}}}

 \def\0{{\mathbf{0}}}

\def\({\left(}
\def\){\right)}
\def\fl#1{\left\lfloor#1\right\rfloor}
\def\rf#1{\left\lceil#1\right\rceil}
\def\fl#1{\left\lfloor#1\right\rfloor}
\def\ni#1{\left\lfloor#1\right\rceil}
\def\sumstar#1{\mathop{\sum\vphantom|^{\!\!*}\,}_{#1}}

\def\mand{\qquad \mbox{and} \qquad}

\def\tblue#1{\begin{color}{blue}{{#1}}\end{color}}







\mathsurround=1pt

\def\bfdefault{b}

\def \F{{\mathbb F}}
\def \K{{\mathbb K}}
\def \N{{\mathbb N}}
\def \Z{{\mathbb Z}}
\def \P{{\mathbb P}}
\def \Q{{\mathbb Q}}
\def \R{{\mathbb R}}
\def \C{{\mathbb C}}
\def\Fp{\F_p}
\def \fp{\Fp^*}

 \def \xbar{\overline x}

 \title[Sums of Kloosterman sums and the discrepancy of modular inverses]{Triple sums of Kloosterman sums and the discrepancy of modular inverses}


\author[V. Blomer]{Valentin Blomer}
 \address{Mathematisches Institut, Universit{\"a}t Bonn, 
Bonn, D-53115 
Germany}
 \email{blomer@math.uni-bonn.de}
 

 \author[M.~S.~Risager]{Morten S. Risager}
 \address{Department of Mathematical Sciences, 
  Universitetsparken~5,
University of Copenhagen, 
2100 Copenhagen Ø, 
Denmark}
 \email{risager@math.ku.dk}

 \author[I.~E.~Shparlinski]{Igor E. Shparlinski}
 \address{School of Mathematics and Statistics, University of New South Wales.
 Sydney, NSW 2052, Australia}
 \email{igor.shparlinski@unsw.edu.au}
\date{\today}

\begin{abstract} 
We investigate the distribution of modular inverses modulo positive integers $c$ in a large interval. We  provide upper and lower bounds  for their   box, ball and  isotropic discrepancy, thereby exhibiting some deviations from random point sets. 
  The analysis is based, among other things, on a new bound for a triple sum of Kloosterman sums.   
 \end{abstract}

\keywords{Modular inverses, 
Kloosterman sums, spectral analysis, discrepancy} 
\subjclass[2020]{Primary: 11K38, 11L05; Secondary: 11F12}

\maketitle


\section{Introduction} 
\subsection{Motivation and set-up}

 For  a given integer $c \ge 1 $, the distribution of the   points 
\begin{equation}
\label{eq: Set ab}
 \left\{\(\frac{a}{c}, \frac{b}{c}\):~ \begin{matrix*}[l]
  ab \equiv 1 \bmod c,\\ 1 \le a,b \leq c\end{matrix*}\right\}
\subseteq	(\mathbb{R}/\mathbb{Z})^2
\end{equation}
has been 
 studied in a large number of works; see~\cite{Shp2} for a survey 
and also more recent works~\cite{Baier,BrHay,Chan,GarShp,Hump,Ust}.  In this paper we vary $c$ as well and  
look at the distributional properties of the set of pairs
 \begin{equation}
\label{eq: Set abc}
\cS(X) =  \left\{\(\frac{a}{c}, \frac{b}{c}\):~\begin{matrix*}[l]ab \equiv 1 \bmod c, \\
1 \le a,b \leq c \le X
\end{matrix*}\right\}
\subseteq	(\mathbb{R}/\mathbb{Z})^2,
\end{equation}
which corresponds to   the union of the sets~\eqref{eq: Set ab}  over all $c\leq X$.  As $X\to \infty$, for the  cardinality $N(X) = \# \cS(X)$ we have the classical asymptotic formula
\begin{equation}
\label{eq: NX-total}
N(X) = \sum_{c \le X} \varphi(c) =\( \frac{3}{\pi^2} + o(1)\) X^2,
\end{equation} 
 where, as usual, $\varphi(k)$ denotes the Euler function;  
 see~\cite[Satz~1, p.~144]{Walf} and~\cite{Liu}  for explicit error estimates.  

The distribution of points from $\cS(X)$ has been  considered by Selberg~\cite{Selb} and later by
Good~\cite[p.~119--120]{Good}, 
although this seems to not be so widely known. They both use spectral theory of automorphic forms to establish  equidistribution  via  the Weyl criterion,   but they do not state explicit error terms.  Our main results below provide in particular quantitative forms of the equidistribution result of Selberg  
and Good.  

The connection to the spectral theory of automorphic forms comes from the  following observation: writing the congruence $ab \equiv 1 \bmod c$ as
$ab-cd=1$, the set $\cS(X)$ can be interpreted  
as a set of double cosets in  $\SL_2(\Z)$:  
there is a bijection
\[
  \begin{array}{ccc}
\{\gamma\in \Gamma_{\!\infty}\backslash\SL_2(\Z)\slash\Gamma_{\!\infty}:~  0< c \leq X\}&\to &  \cS(X), \\
\gamma = \begin{pmatrix}
  a&d\\c&b
\end{pmatrix} 
& \mapsto & \displaystyle \(\frac{a}{c}, \frac{b}{c}\) ,
\end{array}
\]
where as usual 
$
\Gamma_{\!\infty}=\left\{\pm\left(\begin{smallmatrix}1&n\\0&1 \end{smallmatrix}\right), :~ n\in\Z\right\}$.

Another popular interpretation of such results is in terms of the distribution of solutions 
of linear equations; see~\cite{BetCha,DinSin,Dolg,Fuji,Rieg, Shp1}.

Our results are based on a new bound of triple sums of Kloosterman sums, which we believe 
is of independent interest, and also on some  
techniques in distribution theory, 
including those of Barton,  Montgomery and Vaaler~\cite{BMV} and 
of Schmidt~\cite{Schm}, which we revise and present with a self-contained argument.

\subsection{Triple sums of Kloosterman sums}\label{sec12}

Fourier-analytic techniques to investigate the distributional properties of the sets~\eqref{eq: Set ab} or~\eqref{eq: Set abc} lead to Kloosterman sums
\[ 
\cK(m,n;c) = \sum_{\substack{a,b=1\\
ab \equiv 1 \bmod c}}^{c} \e\(\frac{ma + nb}{c}\) , 
\]
where $\e(z) = \exp(2\pi i z)$. In fact, the vast majority of works has been 
relying on the pointwise bound
\begin{equation}
\label{eq: Kloost}
|\cK(m,n;c)| \le \gcd(m,n,c)^{1/2} c^{1/2} \tau(c),
\end{equation}
 where $\tau$ is the divisor function. This is the 
celebrated Weil bound; see for example~\cite[Corollary~11.12]{IwKow}.

 A major ingredient in our analysis  is a bound for the following
triple sum of Kloosterman sums.  
For integers $M, N\ge 1$ and real $X\ge 1$ we define
\[
\fK(M,N;X) =  \sum_{M \leq |m| < 2M} \sum_{N \leq |n| <  2N} \left|\sum_{c  \le X}  \cK(m, n;c)\right|.
\]

There are uniform individual bounds available for the innermost sum over $c$, due to 
Sarnak and Tsimerman~\cite[Theorem~2]{SarTsim}
for $mn> 0$ and  K{\i}ral~\cite[Theorem~2]{Kir} for $mn<0$.  These bounds  imply that
\begin{equation}
\label{eq: KloostAver}
\left|\sum_{c \le X}  \cK(m,n;c)  \right|
\le \( |mn|^{1/4} X+ |mn|^\vartheta X^{7/6} \) (|mn|X)^{o(1)} ,  
\end{equation}
as $|mn|X \to \infty$, where  $\vartheta$ is the best known 
exponent towards the Ramanujan--Petersson conjecture.  
Using~\eqref{eq: KloostAver}, one immediately derives 
\[
\fK(M,N;X)  \le \((MN)^{5/4}X+(MN)^{1+\vartheta} X^{7/6}\)(MNX)^{o(1)}, 
\]
We  obtain a substantially stronger (even under the assumption $\vartheta = 0$) 
bound on average over $n$ and $m$, which does not depend on $\vartheta$ 
and which we believe to be of independent interest. 

\begin{thm}
\label{thm:Triple Sum} 
Let $M, N, X \ge 1$ be  real numbers. Then
\[
\fK(M,N;X)  \le \(MNX+(MN)^{2/3} X^{7/6}\)(MNX)^{o(1)}, 
\]
as $MNX \to \infty$.  
\end{thm}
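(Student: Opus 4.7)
The plan is to convert the inner sum over $c$ into a spectral sum via the Kuznetsov trace formula, and then to exploit averaging over $m$ and $n$ using the Deshouillers-Iwaniec spectral large sieve.

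First I would remove the absolute values by introducing unimodular phases $\eta_{m,n}$ (chosen according to the complex argument of $\sum_{c\le X} \cK(m,n;c)$), and dyadically decompose the sharp cutoff $c \le X$ into smooth bumps $w_C$ supported on $c \asymp C$ for dyadic $C \le X$, at the cost of a $\log X$ factor. For each such $C$, the task reduces to bounding
\[
\sum_{m,n} \eta_{m,n} \sum_c w_C(c)\,\cK(m,n;c).
\]
I would then apply the Kuznetsov trace formula separately for $mn > 0$ (using the J-Bessel kernel) and for $mn < 0$ (using the K-Bessel variant as developed by K{\i}ral). This yields a spectral expansion of the shape
\[
\sum_j h_C(t_j;m,n)\, \tilde\rho_j(m)\, \tilde\rho_j(n) + \text{(Eisenstein and holomorphic contributions)},
\]
where $\tilde\rho_j(n) = \rho_j(n)/\sqrt{\cosh\pi t_j}$, and a direct analysis of the Bessel transform reveals that $h_C(t;m,n)$ is effectively supported on $|t| \le T := 1+\sqrt{MN}/C$, with uniform size bounds depending on $C$.

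The second step is to use Hecke multiplicativity, $\rho_j(m)\rho_j(n) = \rho_j(1) \sum_{d\mid (m,n)} \rho_j(mn/d^2)$, to convert the bilinear $(m,n)$-sum into a sum over $d \le \min(M,N)$ of linear forms $\sum_r \alpha_r^{(d)} \rho_j(r)$ paired against $\rho_j(1)$, where $r \asymp MN/d^2$ and the coefficients $\alpha_r^{(d)}$ are divisor-bounded. A Cauchy-Schwarz in the spectral variable $j$, combined with two applications of the Deshouillers-Iwaniec spectral large sieve, reduces the bound to an expression in $T$, $MN/d^2$ and $\|h_C\|_\infty$. Summing over $d$ and over dyadic $C$ should yield the two terms of the theorem: $MNX$ arises from the range $C \gtrsim \sqrt{MN}$ where $T=O(1)$, while $(MN)^{2/3}X^{7/6}$ comes from the intermediate range after an optimisation of the parameters. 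For the smallest values of $C$, where the spectral method degenerates because $T$ becomes too large, I would fall back on the Weil bound $|\cK(m,n;c)| \ll c^{1/2+o(1)}$, which contributes $\ll MN C^{3/2+o(1)}$ and can be absorbed into the stated bound.

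The principal technical obstacle will be the precise analysis of the Bessel transforms $h_C(t;m,n)$, especially through the transition regions where the asymptotics of $J_{2it}$ and $K_{2it}$ change character (around $|t|\sim 1$, $x\sim |t|$, and $X_0 := \sqrt{MN}/C \sim 1$). Secondary difficulties include a uniform treatment of the continuous-spectrum contribution (whose Fourier coefficients are divisor sums $\sigma_{2it}(n)$ that must be controlled uniformly in $t$), and the delicate balancing of the ranges of $C$, $d$ and $T$ needed to extract the sharp exponents $2/3$ and $7/6$ in the second term.
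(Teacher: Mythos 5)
Your overall architecture --- Kuznetsov plus the spectral large sieve, with the bilinear $(m,n)$-sum converted to a linear form --- is sound in spirit and genuinely differs in organisation from the paper's. The paper first proves the \emph{second-moment} bound
\[
\fK^{(2)}(N;X)=\sum_{N\le|n|<2N}\Bigl|\sum_{c\le X}\cK(n,1;c)\Bigr|^2\preccurlyeq NX^2+N^{1/3}X^{7/3}
\]
(Theorem~\ref{thm:second}, via Proposition~\ref{prop:Kpm}) and then deduces the triple sum by Cauchy--Schwarz together with \emph{Selberg's identity} $\cK(m,n;c)=\sum_{d\mid(m,n,c)}d\,\cK(mn/d^2,1;c/d)$ applied on the Kloosterman side. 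You instead keep both Fourier parameters, remove the modulus with phases $\eta_{m,n}$, apply Kuznetsov, and then use the Hecke relation $\rho(m)\rho(n)=\rho(1)\sum_{d\mid(m,n)}\rho(mn/d^2)$ on the \emph{spectral} side. These two reductions are equivalent (Selberg's identity is the arithmetic shadow of Hecke multiplicativity), and the Cauchy--Schwarz step plays the same role in both; the paper's route has the advantage of producing the stronger second-moment statement as an intermediate result.

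There is, however, a genuine gap in your treatment of the sharp cut-off $c\le X$, and it sits exactly where the second term $(MN)^{2/3}X^{7/6}$ lives. A sharp cut-off cannot be written as a sum of $O(\log X)$ smooth dyadic bumps ``at the cost of a $\log X$ factor'': near $c\approx X$ a sharp edge necessarily survives, and the smoothness scale of the topmost piece is constrained by it. If one does insert a genuinely smooth weight (as in the paper's remark following Theorem~\ref{thm:Triple Sum}), the term $(MN)^{2/3}X^{7/6}$ \emph{disappears}. Indeed, carrying out your spectral estimate with smooth $w_C$, $T\asymp 1+\sqrt{MN}/C$ and $\|\check V\|_\infty\asymp C$ gives, for every dyadic $C\le X$, a bound $\preccurlyeq MNC$ after the $d$-sum, hence $\preccurlyeq MNX$ in total --- no second term arises from any ``intermediate range'' of $C$. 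So your claim that $(MN)^{2/3}X^{7/6}$ ``comes from the intermediate range after an optimisation of the parameters'' misidentifies its source; in the paper it comes from smoothing the edge at scale $\Delta X$ (see~\eqref{weil}), bounding the residual edge piece by the Weil bound, and noting that the Bessel/Mellin transform of the $\Delta$-smoothed weight spreads over a window of length $\Delta^{-1}$ in the Mellin variable $r$ (Lemma~\ref{analysis}); balancing $\Delta=(NX)^{-1/3}$ produces the exponent $7/6$. Your fall-back to Weil for the \emph{smallest} $C$ is a correct (if not strictly necessary) observation, but it addresses the wrong regime; it is the transition at $c\approx X$ that needs the extra argument, and your outline as written would only bound a smoothed analogue of $\fK(M,N;X)$.
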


\begin{rem}\label{rem:sumRaman} Using the trivial bound  
$|\cK(0,n;c)| \le \gcd(c,n)$ for Ramanujan sums, the statement remains true for $M=0$ if we replace $M$ on the right-hand side with $M+1$.   
\end{rem}

The first term $MNX$ is consistent with square-root cancellation in the $c$-sum. 
The second term is an artefact of the sharp cut-off (with respect to $c$) and disappears when we insert a smooth weight $W(c/X)$. For $M=N = 1$ it matches the classical bound of Kuznetsov~\cite[Theorem~3]{Ku}. It is dominated by the first term as soon as $MN > X^{1/2}$.

For comparison, we note that the (modified) {\it Selberg--Linnik conjecture\/} states that
\begin{equation}\label{Selberg-Linnik}
\left|\sum_{c \le X}   \cK(m,n;c)  \right|
\le X(|mn|X)^{o(1)} ;
\end{equation}
see, for example,~\cite[Equation~(7)]{SarTsim}. This is however completely out of reach using current methods.
 
 
Theorem~\ref{thm:Triple Sum} is a simple consequence; see Section~\ref{sec:prelim}, of the following  stronger result for the second moment
\[
\fK^{(2)}( N;X) =  \sum_{N \leq |n| <  2N} \left|\sum_{c  \le X}  \cK(n, 1;c)\right|^2.
\]

\begin{thm}
\label{thm:second} 
 Let $N, X \ge 1$ be   real numbers. Then
\[
\fK^{(2)}(N;X)  \le \(NX^2+N^{1/3} X^{7/3}\)(NX)^{o(1)}, 
\]
as $NX \to \infty$.  
\end{thm}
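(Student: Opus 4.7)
\textbf{Proof plan for Theorem \ref{thm:second}.} The plan is to combine the Kuznetsov trace formula with the Deshouillers--Iwaniec spectral large sieve, and to handle the sharp cut-off $c\le X$ via a smoothing argument.

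\emph{Step 1 (smoothing the cut-off).} Sandwich $\mathbf{1}_{c\le X}$ between smooth majorant and minorant $W^\pm(c/X)$ of transition width $\delta X$, for a parameter $\delta\in (0,1]$ to be chosen at the end. Replacing the sharp cut-off by $W^\pm$ introduces an error supported on the interval $c\in [X, X(1+\delta)]$, which we bound via the individual estimates~\eqref{eq: KloostAver} of Sarnak--Tsimerman and K{\i}ral applied to the shifted sums, combined with the trivial (Weil) bound for the very short interval, followed by Cauchy--Schwarz in the $n$-variable.

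\emph{Step 2 (Kuznetsov).} For the smoothed sum, Kuznetsov's trace formula gives
\[
\sum_c\cK(n,1;c)W^\pm(c/X) = \sum_j\frac{\sqrt{|n|}}{\cosh\pi t_j}\widehat{\phi_n}(t_j)\rho_j(n)\overline{\rho_j(1)} + (\text{Eisenstein contribution}),
\]
where $\phi_n(y) = (4\pi\sqrt{|n|}/y)\,W^\pm(4\pi\sqrt{|n|}/(yX))$ and $\widehat{\phi_n}$ is the Sears--Titchmarsh/Bessel transform. A stationary-phase analysis, using the Bessel asymptotics and the smoothness of $W^\pm$, shows that $\widehat{\phi_n}(t)$ has effective spectral support $|t|\ll T^\ast := \max(1,\delta^{-1},\sqrt{|n|}/X)$, with amplitude bounds polynomial in these parameters.

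\emph{Step 3 (Mellin separation).} Because $\widehat{\phi_n}(t)$ depends on $n$ only through the combination $\sqrt{|n|}/X$, a Mellin inversion in that variable writes $\widehat{\phi_n}(t)=\frac{1}{2\pi i}\int_{(\sigma)}(\sqrt{|n|}/X)^{s}H(t,s)\,ds$ with $H(t,s)$ independent of $n$. This is the device that lets us apply the large sieve with $n$-independent spectral weights.

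\emph{Step 4 (spectral large sieve).} For each $s$ on the contour, apply the dual Deshouillers--Iwaniec large sieve
\[
\sum_{|n|\asymp N}\left|\sum_{|t_j|\le T^\ast}b_j\rho_j(n)\right|^{2} \ll_\epsilon (T^{\ast\,2}+N^{1+\epsilon})\sum_{|t_j|\le T^\ast}|b_j|^{2}\cosh\pi t_j
\]
with $b_j=H(t_j,s)\overline{\rho_j(1)}/\cosh\pi t_j$, together with the spectral large sieve at $n=1$ to bound $\sum_{|t_j|\le T^\ast}|\rho_j(1)|^{2}/\cosh\pi t_j\ll(T^\ast)^{2}$. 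The Eisenstein contribution is handled analogously, reducing via $\cK(0,n;c)$ being a Ramanujan sum to a divisor-type estimate combined with classical moment bounds for $\zeta(1/2+it)$.

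\emph{Step 5 (assembly and optimization).} Combining Steps 2--4, the smoothed main term is bounded by $X^{2}((T^\ast)^{2}+N)(NX)^{o(1)}$. Adding the Step 1 smoothing error and choosing $\delta$ to balance the two largest contributions yields $(NX^{2}+N^{1/3}X^{7/3})(NX)^{o(1)}$.

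\emph{Main obstacle.} The chief technical difficulty is the uniform analysis of the Bessel transform $\widehat{\phi_n}(t)$ in the transition regions $|t|\asymp\sqrt{|n|}/X$ and $|t|\asymp\delta^{-1}$, and ensuring that the Mellin separation in Step 3 delivers input compatible with the large sieve of Step 4 uniformly on the contour. A second subtle point is eliminating the Ramanujan--Petersson exponent $\vartheta$ from the final bound: at every step where $\vartheta$ would otherwise enter (for example via pointwise bounds on $\rho_j(1)$), it must be replaced by the corresponding second-moment estimate provided by the large sieve.
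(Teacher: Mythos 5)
Your proposal follows the same blueprint as the paper---sandwich the sharp cut-off between smooth weights, apply Kuznetsov, separate the $n$-dependence by a Mellin inversion in $\sqrt{n}/X$, and invoke the Deshouillers--Iwaniec spectral large sieve---but Step~5 contains a genuine quantitative gap and the stated conclusion does not follow from the estimates you write down. With transition width $\delta$, the Weil-bound smoothing error, after squaring and summing over $n$, is $\preccurlyeq N\delta^2 X^3$; your main-term estimate $X^2\bigl((T^\ast)^2+N\bigr)$ with $T^\ast\asymp\delta^{-1}$ contributes $X^2\delta^{-2}$. Balancing these forces $\delta=(NX)^{-1/4}$ and yields only $NX^2+N^{1/2}X^{5/2}$, and since $N^{1/2}X^{5/2}\geq N^{1/3}X^{7/3}$ for all $N,X\geq 1$, you are short by a factor $(NX)^{1/6}$. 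The culprit is that ``effective spectral support $|t|\ll T^\ast$'' followed by a single large-sieve application on $[0,T^\ast]$ ignores the decay of the Bessel transform across spectral scales and overcounts by a factor of $T^\ast$. The paper avoids this by first normalizing by $1/c$ (Proposition~\ref{prop:Kpm}, from which Theorem~\ref{thm:second} is recovered by partial summation), then decomposing dyadically in the spectral parameter around the stationary point $2|t|\approx|r|$ and inserting the precise transform bounds of Lemma~\ref{analysis} together with the weights $\omega_\nu^{\pm}$; the resulting $r$-integral against the Mellin transform of the smoothing weight then produces $N+\Delta^{-1}$ rather than $N+\Delta^{-2}$, and the balance $\Delta=(NX)^{-1/3}$ gives the stated exponent. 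This amplitude-weighted dyadic accounting is the technical heart of the argument and cannot be replaced by the crude $(T^\ast)^2+N$ estimate.

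Two smaller points: in Step~1 the pointwise Weil bound~\eqref{eq: Kloost} suffices on the short transition interval, and invoking the Sarnak--Tsimerman and K{\i}ral estimates there is unnecessary and would risk reinstating the exponent $\vartheta$ you are trying to avoid. In Step~4, tying the Eisenstein contribution to the Ramanujan sum $\cK(0,n;c)$ is a confusion---the Ramanujan sums enter only in the $m=0$ terms of the triple sum (Remark~\ref{rem:sumRaman}), while the continuous spectrum of Kuznetsov is treated on equal footing with the discrete one by the unified measure $d\mu(\varpi)$ of Lemma~\ref{lem2a} and the large sieve of Corollary~\ref{coro}.
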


Again the first term is best possible and dominates the second as soon as $N > X^{1/2}$. Historically, a first attempt to bound a second moment of sums of Kloosterman sums, written before the spectral large sieve of Deshouillers--Iwaniec was available and  in several aspects less useful, can be found in~\cite[Section~3]{PY}. 


We apply Theorem~\ref{thm:Triple Sum} to 
estimate 
 how well $\cS(X)$ equidistributes on various types of sets.   
  It is reasonable to expect that the set $\cS(X)$ behaves by and large like a random set of $N(X)$ points in $(\R/\Z)^2$, although on a small scale it has some interesting arithmetic features; see Figure~\ref{scatter-plot}.

  \begin{figure}[ht]
    \begin{center}
        \includegraphics[scale=0.8]{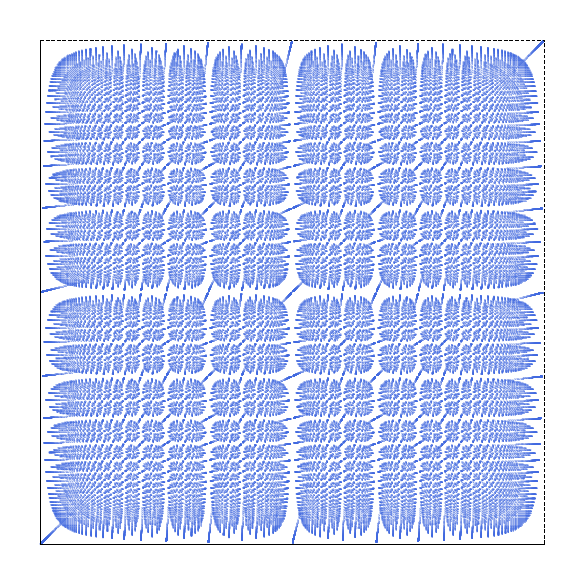}
    \caption{ Modular inverses: $X=600$, $N(X)=109500$.}
    \label{scatter-plot}
    \end{center}
    \end{figure}
  
   By~\eqref{eq: NX-total}, the average spacing between two points in $\cS(X)$ is about $1/X$, so most optimistically one might hope to say something about the distribution of points in sets of volume a little larger than $1/X^2$. Clearly, for smaller sets 
we cannot expect 
any reasonable statistics. However, already for boxes of volume $1/X$ some irregularities occur because under the hyperbola $\{(x, y) \in [0, 1)^2 :~ xy < 1/X\}$ the only points in $\cS(X)$ are 
\begin{equation}\label{eq: 1/c 1/c}
   (x, y) = (1/c, 1/c),  \qquad \sqrt{X} < c \leq X. 
   \end{equation}  
   It follows that
\begin{itemize}
\item the box $[0, 1] \times [0, 1/(2X)]$ contains no point from $\cS(X)$;   
\item the box $[2/\sqrt{X}, 3/\sqrt{X}] \times [0.2/ \sqrt{X}, 0.25/ \sqrt{X}]$ contains no point from $\cS(X)$;  
\item the box $[0,  1/\sqrt{X}] \times [0, 1/\sqrt{X}]$ contains exactly the $(1 + o(1))X$ points~\eqref{eq: 1/c 1/c} 
while by~\eqref{eq: NX-total} the expected value is only $(3/\pi^2 + o(1))X$.  
\end{itemize}
Similar phenomena occur at other rational points with small denominators.

From this discussion we conclude the following:
\begin{itemize}
\item  The smallest distance between two points in $\cS(X)$ is of size $1/X^2$, namely between the points 
\[(1/[X], 1/[X]) \quad \text{and} \quad (1/([X] -1), 1/([X]-1)).
\] 
\item There are discs of radius $r \gg 1/\sqrt{X}$ without points in  $\cS(X)$, while using the bound~\eqref{eq:DiscrSmooth} below, we see that this  is tight since for any fixed $\varepsilon > 0$ and sufficiently 
large $X$, any disc of radius $r \ge  X^{-1/2+ \varepsilon}$ contains a point from  $\cS(X)$
(in fact, at least $cr^2$ such points for some absolute constant $c>0$). 
\end{itemize}

 \subsection{Discrepancy} 
 
\subsubsection{General definitions}

An important quantity  when studying the distribution  of a finite set $S$ in some ambient space $\cT$ is the discrepancy. Let $\cA$ be a set of measurable subsets of $\cT$. Then the discrepancy $\Delta(S, \cA)$ of $S$ with respect to $\cA$ is
\[
\Delta(S, \cA) = \underset{A \in \cA}{\sup} \left| \frac{\# (S \cap A)}{\# S} - \mu(A) \right|.
\]
If $S = \cS(X)$, where $\cS(X)$ is given by~\eqref{eq: Set abc}, we write  $\Delta(X, \cA)$ instead of $\Delta(\cS(X), \cA)$ for notational simplicity.

Typical choices for $\cA$ are the subset of boxes or the subset of balls or the subset of convex sets inside of some translation of the unit square 
$[0,1)^2$, which are then mapped to $ \(\R/ \Z\)^2$ under the natural reduction modulo $1$.  With this in mind, let $\cB$ be the set of boxes 
\begin{equation}\label{eq: Box B}
   B= [\xi, \xi+ \alpha] \times [\zeta, \zeta + \beta] \subseteq \(\R/ \Z\)^2
   \end{equation}
with $0\leq \alpha, \beta < 1$, let $\cD$  be the set of (injective) discs 
\begin{equation}\label{eq: Disc D}
   D = \{\bm \xi :~\| \bm \xi - \bm \omega  \| < R \}\subseteq \(\R/ \Z\)^2
   \end{equation}
for $0 \leq R < 1/2$, and let $\cC$ the  set of convex subsets in 
\[
\sfU = [0, 1)^2, 
\]
 which we identify with $ \(\R/ \Z\)^2$.   Note that a convex subset of any translate of  $\sfU$, after mapping to 
 $ \(\R/ \Z\)^2$ can be partitioned into finitely many convex subsets of the above type. Hence, it is enough to study the discrepancy with respect to the above class of convex sets. The discrepancy with respect to convex sets is often called isotropic discrepancy.  We   refer to~\cite{DrTi,KuNi} for historical background on isotropic discrepancy.

In the framework of empirical processes indexed by sets, one would expect that the discrepancy of random point sets with respect to any reasonable class of test sets $\cA$ satisfies the law of iterated logarithm; in particular
\begin{equation}\label{limsup}
 \limsup_{\#S \rightarrow \infty}\( \Delta(S, \cA)   \frac{\sqrt{\#S}}{\sqrt{2\log\log \#S}}\) = \frac{1}{2} \quad \text{almost surely}.
 \end{equation}
 In particular this holds for the isotropic discrepancy of random point sets in 
 $\(\R/\Z\)^2$; cf.\  
Philipp~\cite[Section~3]{Phil}.

Available techniques, however, are very sensitive to the class of test sets. For instance, 
 by a special (and somewhat crude) form of the result of Schmidt~\cite[Theorem~2]{Schm},  we   have 
\begin{equation}\label{schmidtbound}
\Delta(X, \cC) \ll \Delta(X, \cB)^{1/2}; 
\end{equation}
see also~\cite[Chapter~2, Theorem~1.6]{KuNi}.

\subsubsection{Lower bounds}
From the end of  Section~\ref{sec12} we conclude
\[
\Delta(X, \cB) \gg 1/X \mand \Delta(X, \cD) \gg 1/X.
\]
A slightly sharper lower bound can be obtained for $\Delta(X, \cC)$.  

\begin{thm}\label{thm:lower-convex} We have
\[\Delta(X, \cC) \gg \frac{\log X}{X}.
\]
\end{thm}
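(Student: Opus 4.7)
The plan is to exhibit a single convex set $R$ whose discrepancy with respect to $\cS(X)$ already attains the claimed order. The candidate is
\[
R = \{(x,y)\in\sfU:~ xy\ge 1/X\}.
\]
First I would verify that $R\in\cC$. Any $(x,y)\in R$ has $x, y > 0$, and for $(x_1,y_1),(x_2,y_2)\in R$ the AM--GM inequality gives
\[
\frac{x_1+x_2}{2}\cdot\frac{y_1+y_2}{2} \ge \sqrt{x_1 y_1}\cdot \sqrt{x_2y_2} \ge 1/X,
\]
with the analogous bound for arbitrary convex combinations, so $R$ is convex. Splitting the integral at $x = 1/X$ yields
\[
\mu(\sfU\setminus R) = \int_0^1 \min\bigl(1,1/(Xx)\bigr)\,dx = \frac{1+\log X}{X}.
\]

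The key arithmetic input is that $\cS(X)\setminus R$ is very small. Suppose $(a/n,b/n)\in \cS(X)$ satisfies $ab/n^2 < 1/X$, equivalently $ab < n^2/X \le n$. Since $ab\equiv 1\pmod n$ and $ab \ge 1$, this forces $ab = 1$, so $a = b = 1$; then $1/n^2 < 1/X$ requires $n > \sqrt X$. Therefore
\[
\#(\cS(X)\setminus R) = X - \fl{\sqrt X}, \qquad \#(\cS(X)\cap R) = N(X) - X + O(\sqrt X).
\]

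Combining these ingredients with the asymptotic~\eqref{eq: NX-total} yields
\[
\Delta(X,\cC) \;\ge\; \left|\mu(R) - \frac{\#(\cS(X)\cap R)}{N(X)}\right| = \frac{1+\log X}{X} - \frac{X - \fl{\sqrt X}}{N(X)} = \frac{\log X}{X}\bigl(1 + o(1)\bigr),
\]
since $(X-\fl{\sqrt X})/N(X) = \pi^2/(3X) + O(1/X^{3/2})$ is absorbed into the error. No serious obstacle arises in this strategy; the one conceptual point is recognising that, although the region $\{xy \le 1/X\}$ in which the diagonal points $(1/c, 1/c)$ accumulate is non-convex, its complement $R$ is convex, and so this single test set already records the full deviation responsible for the lower bound.
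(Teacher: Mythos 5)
Your proof is correct and follows essentially the same route as the paper: both take the complement $R = \{xy \ge 1/X\}$ of the region under the hyperbola as the single convex test set, use the observation from Section 1.2 that the only points of $\cS(X)$ under that hyperbola are the diagonal ones $(1/c, 1/c)$ with $\sqrt{X} < c \le X$, and compare $\mu(R)$ with $\#(\cS(X)\cap R)/N(X)$. You simply spell out a few steps the paper leaves implicit (the AM--GM check of convexity and the exact characterisation of $\cS(X)\setminus R$, modulo the single point at $c=1$ which your $O(\sqrt X)$ hedge absorbs).
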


\begin{proof} We observe that there are 
$O(X)$ points from    $\cS(X)$  under the hyperbola $\{(x, y) \in \sfU^2 :~ xy < 1/X\}$.  Therefore, the convex domain 
$\left\{(x, y) \in \sfU^2 :~ xy  \geq 1/X\right\}$ contains $N(X) + O(X)$ points from $\cS(X)$,  while its area is 
$1 - X^{-1} \log X  + O\(X^{-1}\)$. We conclude that 
\[
\Delta(X, \cC) \ge \left | \frac{N(X) + O(X)}{N(X)} -  1  + \frac{ \log X  + O(1)}{X}\right | = \frac{\log X + O(1)}{X}, 
\]which in fact is a little stronger than the claimed bound.
\end{proof}

\begin{rem} We emphasize that  the lower bound in Theorem~\ref{thm:lower-convex} is slightly larger than what we would expect with probability one from a random point set 
by~\eqref{limsup}. In particular, the set $\cS(X)$ has some arithmetic structures that, at least on a small scale, lead to deviations from randomness. 
We refer again to  Figure~\ref{scatter-plot} whose distinctive cellular structure appears to be shaped by underlying arithmetic properties; the cells sit between lines with rational coordinates with small denominators.
See also~\cite{BBRR} and~\cite{GarShp} for such deviations in slightly different arithmetic contexts. 
\end{rem}

\subsubsection{Upper bounds}

The bound~\eqref{schmidtbound} indicates that our techniques to analyze  the discrepancy are quite sensitive to the choice of $\cA$, which becomes in particular apparent if Fourier-analytic methods are applied: in this case the bounds  depend on regularity properties of the Fourier transform of the characteristic functions of $A \in \cA$. The Fourier transform of boxes is  in the $\ell^{1+\varepsilon}$-space for any fixed $\varepsilon>0$,  which leads to fairly strong bounds.  
The Fourier transform of two-dimensional balls is only in 
the $\ell^{4/3 + \varepsilon}$-space; cf.~\cite[Equations~(2.3) and~(2.6)]{Hump}), 
 so one may expect slightly weaker bounds in this case. For general convex sets we have much less control on the Fourier transform of the corresponding characteristic functions, and we  study the isotropic discrepancy   by an approximation argument to be described later.

\begin{thm}
\label{thm:Discr}
We have 
\[
\Delta(X, \cB)  \le X^{-5/6 + o(1)}, 
\]
as $X\to \infty$.  
\end{thm}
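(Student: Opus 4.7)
The plan is to invoke the two-dimensional Erd\H{o}s--Tur\'an--Koksma inequality to reduce the problem to estimating the Weyl sums of $\cS(X)$, which are exactly the triple Kloosterman sums handled by Theorem~\ref{thm:Triple Sum}, and then to optimize the cutoff parameter.

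Concretely, for any integer $H\ge 1$ the Erd\H{o}s--Tur\'an--Koksma inequality applied to $\cS(X)\subset (\R/\Z)^2$ gives
\[
\Delta(X,\cB)\ll \frac{1}{H}+\frac{1}{N(X)}\sum_{\substack{(m,n)\in\Z^2\setminus\{(0,0)\}\\ |m|,|n|\le H}}\frac{1}{(|m|+1)(|n|+1)}\left|\sum_{c\le X}\cK(m,n;c)\right|,
\]
where the inner expression on the right is precisely the Weyl sum
\[
\sum_{(a/c,b/c)\in\cS(X)} \e\!\left(\frac{ma+nb}{c}\right)=\sum_{c\le X}\cK(m,n;c).
\]
Together with~\eqref{eq: NX-total} this reduces the problem to bounding the weighted sum of Kloosterman sums.

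Next, I would cut the range of $(m,n)$ into dyadic blocks $M\le |m|<2M$, $N\le |n|<2N$ with $M,N$ running over powers of $2$ (plus the two degenerate edge cases $M=0$ or $N=0$, handled via Remark~\ref{rem:sumRaman}). On each block the weight $1/((|m|+1)(|n|+1))$ is of order $1/((M+1)(N+1))$, so the contribution of the block is
\[
\ll \frac{1}{(M+1)(N+1)}\,\fK(M,N;X).
\]
Applying Theorem~\ref{thm:Triple Sum} (with the extension from Remark~\ref{rem:sumRaman} for the degenerate blocks) yields
\[
\frac{\fK(M,N;X)}{(M+1)(N+1)}\le \bigl(X+(MN+1)^{-1/3}X^{7/6}\bigr)(HX)^{o(1)}.
\]
Summing this over the $O((\log H)^2)=X^{o(1)}$ dyadic blocks, the first term contributes $X^{1+o(1)}$ in total, while the sum $\sum_{M,N}(MN+1)^{-1/3}$ converges, so the second term contributes $X^{7/6+o(1)}$. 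Dividing by $N(X)\asymp X^2$ gives
\[
\Delta(X,\cB)\ll \frac{1}{H}+X^{-1+o(1)}+X^{-5/6+o(1)}.
\]
Choosing $H=\lfloor X^{5/6}\rfloor$ balances the two dominant terms and produces the claimed bound $\Delta(X,\cB)\le X^{-5/6+o(1)}$.

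The only genuine obstacle is the $X^{7/6}$ term in Theorem~\ref{thm:Triple Sum}; it propagates untouched through the Erd\H{o}s--Tur\'an--Koksma framework and determines the exponent $5/6$. Under the Selberg--Linnik conjecture~\eqref{Selberg-Linnik} one would instead obtain a bound of the form $X^{-1+o(1)}$, so any future improvement beyond $X^{-5/6}$ will require sharper cancellation in $\sum_{c\le X}\cK(m,n;c)$ rather than any refinement of the discrepancy machinery used here.
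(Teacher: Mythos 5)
Your proposal is correct and follows essentially the same route as the paper: both proofs apply the Koksma--Sz\"usz (Erd\H{o}s--Tur\'an--Koksma) inequality, decompose the frequency range dyadically, and invoke Theorem~\ref{thm:Triple Sum} together with Remark~\ref{rem:sumRaman} for the $m=0$ or $n=0$ blocks. The only cosmetic difference is the cutoff: you take $H=\lfloor X^{5/6}\rfloor$, while the paper takes $2^\ell\approx X$; both choices make the $1/H$ term subdominant to the $X^{-5/6}$ contribution and yield the same exponent.
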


The exponent $-5/6$ in Theorem~\ref{thm:Discr} is an artefact of the sharp cut-off  $c\leq X$ in the definition~\eqref{eq: Set abc} of $\cS(X)$. If we study a ``weighted discrepancy'', which includes a smooth weight $W(c/X)$, our argument gives the essentially best possible 
bound 
\begin{equation}
\label{eq:DiscrSmooth}
\Delta_{\text{smooth}}(X, \cB) \leq X^{-1 + o(1)}.
  \end{equation}
    The same bound would follow also for sharp cut-offs from the modified Selberg--Linnik 
conjecture~\eqref{Selberg-Linnik}, which we reiterate is out of reach with current technology.

For small boxes we can refine  Theorem~\ref{thm:Discr} 
using 
techniques of 
Barton, Montgomery and Vaaler~\cite{BMV} (see Lemma~\ref{lem:BMW-corollary} below),  which appear to  be less known than they deserve.

\begin{thm}
\label{thm:SmallBox}
For a box $B$ as in~\eqref{eq: Box B}  such that 
\begin{equation}
\label{eq: alpha/beta}
 \mu(B) >  X^{-1} 
  \end{equation}
we have 
\[
\left| \frac{\#(B\cap \cS(X))}{N(X)} - \mu(B)  \right|  \le \mu(B)^{2/3}  X^{-1/3+o(1)}, 
\]
as $X\to \infty$. 
\end{thm}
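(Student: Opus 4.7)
The plan is to apply the Fourier method: replace the sharp indicator $\chi_B$ by Beurling--Selberg--Vaaler majorants and minorants of controlled Fourier support, expand in Fourier series, and estimate the resulting weighted sums of Kloosterman sums via Theorem~\ref{thm:Triple Sum}. Set $V = \mu(B)=\alpha\beta$ and introduce a parameter $Y\ge 1$. Tensoring one-dimensional Beurling--Selberg majorants of degrees $M=Y/\alpha$ and $N=Y/\beta$ respectively produces trigonometric polynomials $f^{\pm}$ with Fourier support in $|m|\le M$, $|n|\le N$, satisfying $f^-\le \chi_B\le f^+$, $|\hat f^{\pm}(m,n)|\ll \min(\alpha, 1/|m|)\min(\beta, 1/|n|)$, and $\int (f^+-f^-) \ll V/Y$. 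The hypothesis~\eqref{eq: alpha/beta} is precisely what is needed to permit $Y = V^{1/3}X^{1/3}\ge 1$, which turns out to be the optimal choice.

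Sandwiching and swapping the order of summation converts the counting problem into
\[
\sum_{c\le X}\sum_{\substack{a,b\\ ab\equiv 1 \bmod c}} f^{\pm}\!\left(\frac{a}{c},\frac{b}{c}\right) = \sum_{\substack{|m|\le M\\ |n|\le N}} \hat f^{\pm}(m,n)\sum_{c\le X}\cK(m,n;c).
\]
The term $m=n=0$ gives the main contribution $\mu(B)\,N(X) + O(VX^2/Y)$. For the axes with exactly one of $m,n$ zero I would invoke Remark~\ref{rem:sumRaman} together with the Ramanujan-sum estimate $\sum_{c\le X}|\cK(0,n;c)| \le X\tau(n)$ and $|\hat f^{\pm}(0,n)|\ll \alpha\min(\beta, 1/|n|)$ (and symmetrically for $n=0$); this produces $\ll (\alpha+\beta)X^{1+o(1)}$, which is $\ll V^{2/3}X^{5/3+o(1)}$ thanks to~\eqref{eq: alpha/beta}. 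For the main range $m,n\ne 0$ I would decompose dyadically with $|m|\in[M',2M')$ and $|n|\in[N',2N')$ and estimate each block by $\max|\hat f^{\pm}|\cdot \fK(M',N';X)$ via Theorem~\ref{thm:Triple Sum}. A four-case split according to the size of $M'$ relative to $1/\alpha$ and of $N'$ relative to $1/\beta$ yields geometric series which concentrate at the thresholds $M'\sim 1/\alpha$, $N'\sim 1/\beta$, and the total over all blocks is $\ll (X + V^{1/3}X^{7/6})X^{o(1)}$.

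Collecting the three contributions, the absolute error is
\[
\ll \frac{VX^2}{Y} + V^{1/3}X^{7/6+o(1)} + V^{2/3}X^{5/3+o(1)}.
\]
Taking $Y = V^{1/3}X^{1/3}$ makes the BMV term equal to $V^{2/3}X^{5/3}$, and a direct use of~\eqref{eq: alpha/beta} also gives $V^{1/3}X^{7/6}\le V^{2/3}X^{5/3}$, so the total is $\ll V^{2/3}X^{5/3+o(1)}$; division by $N(X)\asymp X^2$ yields the claimed bound. I expect the main obstacle to be the careful bookkeeping of the four-case dyadic decomposition, namely ensuring that the geometric series in $M',N'$ really concentrate at the claimed thresholds and that the interior ranges $M'<1/\alpha$ or $N'<1/\beta$ (where the Fourier coefficient bound saturates at $\alpha\beta$) do not dominate, together with verifying the Ramanujan-sum tail precisely at the boundary $V=X^{-1}$; the key substantive input is Theorem~\ref{thm:Triple Sum}, which supplies the non-trivial cancellation in Kloosterman sums on average over $(m,n)$.
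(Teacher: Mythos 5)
Your proof is correct and parallels the paper's closely: both replace $\chi_B$ by Beurling--Selberg--Vaaler majorants and minorants (the paper does this via the packaged Lemma~\ref{lem:BMW-corollary}, you do it directly) and then feed the resulting Kloosterman sums into Theorem~\ref{thm:Triple Sum} and Remark~\ref{rem:sumRaman} before balancing the truncation parameter(s). There is one small refinement worth pointing out: you use the decay $|\widehat{f}^{\pm}(m,n)|\ll\min(\alpha,1/|m|)\,\min(\beta,1/|n|)$, whereas Lemma~\ref{lem:BMW-corollary} only controls $\sum_{\vm\ne 0}\bigl|\sum_{s}\e(\langle\vm,s\rangle)\bigr|$ with unit weights, so its error term carries the full $MNX^{-1}+(MN)^{2/3}X^{-5/6}$ and the choice $M_1\asymp\alpha^{-2/3}\beta^{1/3}X^{1/3}$ is forced. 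Your decay estimate makes each dyadic block contribute $\preccurlyeq X+(M'N')^{-1/3}X^{7/6}$, which saturates at $M'\sim 1/\alpha$, $N'\sim 1/\beta$ and is then essentially independent of the Beurling degree $Y$; pushing $Y$ as far as $VX$ instead of $V^{1/3}X^{1/3}$ would in fact yield the slightly stronger $\ll \mu(B)^{1/3}X^{-5/6+o(1)}+X^{-1+o(1)}$ in the range $\mu(B)<X^{-3/4}$ where the theorem is useful, though you only extract the stated $\mu(B)^{2/3}X^{-1/3+o(1)}$. The remaining bookkeeping (four-case split, axis/Ramanujan contribution $\ll(\alpha+\beta)X^{1+o(1)}$, and the role of $\mu(B)>X^{-1}$ in guaranteeing $Y\ge 1$ and absorbing the $X^{1+o(1)}$ and axis terms) is all sound.
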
 

Theorem~\ref{thm:SmallBox} improves Theorem~\ref{thm:Discr}  
 for $\mu(B) < X^{-3/4+\varepsilon}$, while for large boxes $B$,  
 Theorem~\ref{thm:Discr} is more precise. 

The condition~\eqref{eq: alpha/beta} is introduced so that our argument  reaches its  maximal strength.  Our argument continues to work beyond the 
restrictions~\eqref{eq: alpha/beta} and still produces nontrivial results. On the other hand, as we saw above, general statements about boxes of volume below $1/X$ are only of restricted relevance.

Combining our Theorem~\ref{thm:Triple Sum} with  ideas of Harman~\cite[Theorem~2]{Harm}
 we derive the following bound for the distribution in discs. 

\begin{thm}
\label{thm:SmallDisc}
For a disc $D$ as in~\eqref{eq: Disc D}  of radius $R < 1/2$  we have 
\[
\left| \frac{\#(D\cap \cS(X))}{N(X)} - \pi R^2 \right|  \le (X^{-1} + R^{2/3} X^{-2/3 })X^{o(1)}, 
\]
as $X\to \infty$. 
\end{thm}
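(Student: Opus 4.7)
The plan is to follow the Fourier-analytic strategy of Harman, combined with our Theorem~\ref{thm:Triple Sum}. I first approximate $\chi_D$ from above and below by functions $\chi_D^\pm$ on $(\R/\Z)^2$ with Fourier spectrum contained in $\{\vm :~|\vm|\le M\}$ for a truncation parameter $M$ to be chosen, such that $\chi_D^-\le \chi_D\le \chi_D^+$, with $\int(\chi_D^+-\chi_D^-)\ll R/M$ and Fourier coefficients satisfying the pointwise bound
\[
|\hat\chi_D^\pm(\vm)|\ll \min\bigl(R^2,\, R^{1/2}|\vm|^{-3/2}\bigr),
\]
which is the standard Bessel-type decay for the indicator of a disc. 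Such majorants and minorants are constructed exactly as in~\cite{Harm} (see also the proof of \eqref{schmidtbound}-type results in~\cite{KuNi}).

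The Fourier expansion then gives, with $\vm=(m,n)$,
\[
\sum_{(x,y)\in\cS(X)}\chi_D^\pm(x,y)=N(X)\int\chi_D^\pm+\sum_{0<|\vm|\le M}\hat\chi_D^\pm(\vm)\sum_{c\le X}\cK(m,n;c).
\]
The integral contributes $N(X)\pi R^2+O(N(X)R/M)$. To control the exponential sum term, I split $\vm$ dyadically into boxes $M_1\le |m|<2M_1$, $M_2\le|n|<2M_2$, with $M_1,M_2\in\{0,1,2,4,\dots\}\cap[0,M]$, writing $M_0=\max(M_1,M_2)$. The cases where $M_1=0$ or $M_2=0$ are handled using the Ramanujan-sum bound from Remark~\ref{rem:sumRaman}, while for $M_1,M_2\ge 1$ Theorem~\ref{thm:Triple Sum} yields
\[
\sum_{M_1\le|m|<2M_1}\sum_{M_2\le|n|<2M_2}\Bigl|\sum_{c\le X}\cK(m,n;c)\Bigr|\le \bigl(M_1M_2X+(M_1M_2)^{2/3}X^{7/6}\bigr)X^{o(1)}.
\]

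Combining with $|\hat\chi_D^\pm(\vm)|\ll \min(R^2,R^{1/2}M_0^{-3/2})$ and summing dyadically — using the flat bound $R^2$ when $M_0<R^{-1}$ and the Bessel decay $R^{1/2}M_0^{-3/2}$ otherwise — a short calculation shows the Kloosterman contribution is bounded by
\[
\bigl(X+R^{2/3}X^{7/6}+R^{1/2}M^{1/2}X\bigr)X^{o(1)}.
\]
Adding the truncation loss $N(X)R/M\ll X^2 R/M$ and optimizing by choosing $M\asymp X^{2/3}R^{1/3}$ balances $R^{1/2}M^{1/2}X$ against $X^2R/M$, both at $R^{2/3}X^{4/3}$. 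Dividing through by $N(X)\asymp X^2$ yields the claimed bound $(X^{-1}+R^{2/3}X^{-2/3})X^{o(1)}$.

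The main obstacle is the dyadic book-keeping with the two regimes of the Fourier transform of the disc, and verifying that Harman's majorant/minorant construction gives Fourier coefficients with precisely the Bessel-type decay required; once that structural input is in place, the rest of the argument is a mechanical optimization that inherits its strength entirely from the averaged Kloosterman bound of Theorem~\ref{thm:Triple Sum} — the trivial Weil bound~\eqref{eq: Kloost} would only produce the much weaker $X^{-1/2+o(1)}$ which is useless here.
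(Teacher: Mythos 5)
Your proposal is in substance the same as the paper's proof: the paper invokes Harman's Erd\H{o}s--Tur\'an inequality for balls (Lemma~\ref{lem:Harm}) as a black box, combines it with a dyadic decomposition and Theorem~\ref{thm:Triple Sum}, and optimizes the truncation parameter; you simply re-derive the Harman-type majorant/minorant step yourself rather than citing it. The dyadic bookkeeping and the balancing are correct as far as they go.

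One point deserves care. Your asserted Fourier-coefficient bound $|\hat\chi_D^{\pm}(\vm)|\ll\min\bigl(R^2,\,R^{1/2}|\vm|^{-3/2}\bigr)$ overstates what the construction delivers: Harman's majorant/minorant for balls carries an additional per-frequency slack of order $1/M^d$ (compare the $1/L^d$ term inside the sum in Lemma~\ref{lem:Harm}), which is an unavoidable artefact of the band-limited smoothing, not a rounding you can discard. Carrying this term through your dyadic analysis adds a contribution of size
\[
\frac{1}{M^2}\bigl(M^2X+M^{4/3}X^{7/6}\bigr)\,X^{o(1)} \;=\; \bigl(X+M^{-2/3}X^{7/6}\bigr)X^{o(1)}
\]
to the Kloosterman piece, i.e.\ $\bigl(X^{-1}+M^{-2/3}X^{-5/6}\bigr)X^{o(1)}$ after dividing by $N(X)$. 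With your choice $M\asymp X^{2/3}R^{1/3}$ this term is $X^{-23/18}R^{-2/9+o(1)}$, which exceeds the target $X^{-1}$ once $R\lesssim X^{-5/4}$, so the argument as written breaks for very small discs. The fix is exactly what the paper does with its truncation parameter $L=\lfloor X^{1/2}+X^{2/3}R^{1/3}\rfloor$: impose a floor $M\gg X^{1/2}$, which forces $M^{-2/3}X^{-5/6}\ll X^{-7/6}$, and the extra term is harmlessly absorbed. With that correction your argument reproduces the paper's proof.
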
  

The term  $R^{2/3} X^{-2/3}$ in the bound of Theorem~\ref{thm:SmallDisc}  is less than the main term $\pi R^2$ as long as $R > X^{-1/2 + o(1)}$. 
We  see from Theorem~\ref{thm:SmallDisc} that 
\[
\Delta(X, \cD) \le X^{-2/3 + o(1)}.\]


Turning to the isotropic discrepancy, from~\eqref{schmidtbound} and   Theorem~\ref{thm:Discr} we obtain the baseline bound 
$
\Delta(X, \cC)   \le X^{-5/12 + o(1)}, 
$
as $X\to \infty$.  Using the approach of~\cite{Kerr, KerShp, Shp3}, which in turn exploits some ideas
of Schmidt~\cite[Theorem~2]{Schm}, we  establish a stronger bound. 

\begin{thm}
\label{thm:Isotrop Discr}
We have 
\[
\Delta(X, \cC)  \le X^{-11/24+ o(1)}, 
\]
as $X\to \infty$.  
\end{thm}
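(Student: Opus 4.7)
I follow the Schmidt-style decomposition used to derive the baseline bound $\Delta(X,\cC)\ll\Delta(X,\cB)^{1/2}\le X^{-5/12+o(1)}$, but improve its last step by a second-moment estimate based on Theorem~\ref{thm:second} rather than the pointwise bound of Theorem~\ref{thm:Discr}. Fix $C\in\cC$ and a parameter $K\ge 1$ (to be chosen at the end). Partition $[0,1)$ into $K$ intervals of length $1/K$, and inside each vertical strip $V_j=[j/K,(j+1)/K)\times[0,1)$ inscribe the largest axis-aligned rectangle $B_j\subseteq C\cap V_j$, of width $1/K$ and some height $h_j$; set $U=\bigsqcup_{j=0}^{K-1}B_j$. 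Since the upper and lower boundaries of $C$ are respectively a concave and a convex function on $[0,1]$, their total variations are $O(1)$ and therefore $\mu(C\setminus U)\ll 1/K$. The contribution to the discrepancy coming from the thin boundary region $C\setminus U$ is bounded by applying Theorem~\ref{thm:SmallBox} to each of the $O(K)$ narrow boundary rectangles and using H\"older's inequality on the sequence of heights, producing an error of size $O(K^{-1+o(1)})$.

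\textbf{Cauchy--Schwarz and mean square.} Writing $\delta_j=\#(B_j\cap\cS(X))/N(X)-\mu(B_j)$, the above decomposition gives
\[
\frac{\#(C\cap\cS(X))}{N(X)}-\mu(C)=\sum_{j=0}^{K-1}\delta_j+O\big(K^{-1+o(1)}\big).
\]
The pointwise bound $|\sum_j\delta_j|\le K\,\Delta(X,\cB)$ recovers only the baseline $X^{-5/12}$ after optimization in $K$. I instead apply Cauchy--Schwarz
\[
\Big|\sum_{j=0}^{K-1}\delta_j\Big|\le K^{1/2}\Big(\sum_{j=0}^{K-1}|\delta_j|^2\Big)^{1/2}
\]
and estimate the mean square through the Fourier expansion
\[
\delta_j=\frac{1}{N(X)}\sum_{(m,n)\ne(0,0)}\widehat{1_{B_j}}(m,n)\,T(m,n),\qquad T(m,n)=\sum_{c\le X}\cK(m,n;c).
\]
The key observation is that $\widehat{1_{B_j}}(m,n)$ factors as $e(-mj/K)\,\phi(m,K)\,\psi(n,h_j)$, where $\phi(m,K)$ depends only on $m$ and $K$ and is bounded by $\min(1/K,1/|m|)$. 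A dyadic decomposition in $(m,n)$ combined with discrete orthogonality in $j$ reduces $\sum_j|\delta_j|^2$ to weighted second moments $\sum_{|n|\sim N}|T(m,n)|^2$, which are controlled by Theorem~\ref{thm:second} (extended from $m=1$ to general $m$ via the Kloosterman symmetry $\cK(m,n;c)=\cK(mn,1;c)$ valid for $\gcd(m,c)=1$). Carrying out the estimates yields $\sum_j|\delta_j|^2\le X^{-11/8+o(1)}$, so $|\sum_j\delta_j|\le K^{1/2}X^{-11/16+o(1)}$. Balancing this against the $O(K^{-1+o(1)})$ approximation error at $K=X^{11/24}$ gives the desired $\Delta(X,\cC)\le X^{-11/24+o(1)}$.

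\textbf{Main obstacle.} The hardest part of the argument is the mean-square bound $\sum_j|\delta_j|^2\le X^{-11/8+o(1)}$. The obstruction is that the $B_j$ are not exact translates of one another: their heights $h_j$ vary with $j$, so the factor $\psi(n,h_j)$ also depends on $j$, and this spoils a naive application of Parseval in the $j$-variable. I would address it by dyadically partitioning the index set $\{0,\dots,K-1\}$ into classes on which $h_j$ lies in a fixed dyadic range, so that within each class the boxes are translates of one another up to a controlled perturbation. On each class, the orthogonality argument can be executed cleanly, and the contributions are summed using the constraint $\sum_j h_j\ll K$ coming from $\mu(U)\le 1$. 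A secondary technicality is the transfer of Theorem~\ref{thm:second} from $m=1$ to a generic $m$; this is routine using the Kloosterman symmetry together with the Weil bound for the complementary pairs $\gcd(m,c)>1$.
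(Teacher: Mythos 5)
Your proposal takes a genuinely different route from the paper, and while the balancing arithmetic is consistent with the target exponent $-11/24$, the argument has significant unfilled gaps that are not merely routine.

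\textbf{Comparison with the paper's approach.} The paper's proof reduces to $\eta$-well-shaped sets, packs the interior of $\Omega$ by dyadic squares via the construction of Section~3.1, and applies \emph{pointwise} box-discrepancy bounds: Theorem~\ref{thm:Discr} (exponent $-5/6$) for squares down to side $2^{-L} = X^{-3/8}$, and Theorem~\ref{thm:SmallBox} (exponent $\mu(B)^{2/3}X^{-1/3}$) for smaller ones. Crucially, it only proves a \emph{lower} bound $\#(S\cap\cS(X))\geq \mu(S)N(X) + O(\eta X^{37/24+o(1)})$, because the packing lies inside $\Omega$; the matching upper bound is then obtained for free from the fact that the complement of a well-shaped set is well-shaped, so the boundary region is never touched. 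No second-moment estimate enters at this stage --- Theorem~\ref{thm:second} is used earlier, in the proof of Theorem~\ref{thm:Triple Sum}, which underlies the box bounds. Your proposal instead uses $K$ vertical strips and tries to exploit cancellation across strips via Cauchy--Schwarz and the second moment directly. This would be an independent and potentially interesting alternative if it could be made to work, but several steps are not just ``technical.''

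\textbf{Main gap: the mean-square bound.} The claim $\sum_{j}|\delta_j|^2 \leq X^{-11/8+o(1)}$ is the heart of your argument and is not established. The orthogonality in $j$ that you invoke would require the Fourier coefficients $\widehat{1_{B_j}}(m,n)$ to depend on $j$ \emph{only} through a phase $e(-mj/K)$. In fact they also depend on $j$ through both the height $h_j$ \emph{and} the vertical position $a_j$ of the inscribed rectangle, the latter contributing an uncontrolled phase $e(-na_j)$. Your proposed dyadic partition of $\{0,\dots,K-1\}$ by height classes does nothing to fix the position dependence. Within a fixed height class the boxes are not translates of each other in any sense that allows Parseval in $j$, so the reduction to $\sum_{|n|\sim N}|T(m,n)|^2$ does not go through as written. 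Without this step, what you actually have is a pointwise bound on each $\delta_j$ from Theorem~\ref{thm:SmallBox}, which after Cauchy--Schwarz only yields the baseline exponent $-5/12$, not $-11/24$.

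\textbf{Secondary gaps.} First, the claimed error $O(K^{-1+o(1)})$ for the boundary region $C\setminus U$ is not substantiated: $C\setminus U$ is a curved sliver of measure $O(1/K)$, not a box or a clean union of boxes, and bounding $\#((C\setminus U)\cap\cS(X))$ requires further decomposition into rectangles, some of which will be below the threshold $\mu(B)>X^{-1}$ at which Theorem~\ref{thm:SmallBox} applies. The paper avoids this entirely via the complement trick. Second, ``extending Theorem~\ref{thm:second} from $m=1$ to general $m$ via $\cK(m,n;c)=\cK(mn,1;c)$ for $\gcd(m,c)=1$'' is too cavalier in a second-moment context: the paper's reduction uses the full Selberg identity with divisor sums (see the derivation of Theorem~\ref{thm:Triple Sum} from Theorem~\ref{thm:second} in Section~4.1), and squaring before summing over $m$ changes the bookkeeping in a way that is not handled here. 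As it stands, the proposal identifies the right target exponent but does not constitute a proof.
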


\section{Preliminaries}

\subsection{Notation and conventions}
For a finite set $\cS$, we use $\# \cS$ to denote its cardinality.  As  usual, the  equivalent 
notations
\[
U = O(V) \quad \Longleftrightarrow \quad U \ll V \quad \Longleftrightarrow \quad V\gg U
\] 
all mean that $|U|\le  C V$ for some positive constant $C$, 
which, unless indicated otherwise, is absolute.   
We also write $U = V^{o(1)}$ 
if, for any given $\varepsilon>0$,  there exist $c(\varepsilon) > 0$ such that   $ |U|\le c(\varepsilon)V^\varepsilon$.
Furthermore, for two quantities $U$ and $V$, depending on $X,M,N$,  we use the notation
\[U \preccurlyeq V \quad \Longleftrightarrow \quad U \ll V(MNX)^{o(1)}.\]

\subsection{Point distribution in boxes  and exponential sums}
\label{sec:discrepancy}    

For the sake of completeness, we present the auxiliary results in full generality  for $\dimens$-dimensional 
finite sets   $S$ in the set $\cB$ of $d$-dimensional boxes  
\begin{equation}
\label{eq:Box-B-sDim}
B = [\xi_1, \xi_1 +\alpha_1 ] \times \ldots  \times [\xi_\dimens, \xi_\dimens +\alpha_\dimens
]\subseteq  \(\R/\Z\)^\dimens,
\end{equation}
of volume $\mu\(B\) = \alpha_1 \cdots \alpha_\dimens$. For our application we only need the case $d=2$.

One of the basic tools used to study uniformity of
distribution is the celebrated
 Koksma--Sz\"usz inequality~\cite{Kok,Sz} 
(see also~\cite[Theorem~1.21]{DrTi}), which generalises the Erd\H{o}s--Tur\'an inequality to general dimensions. Like the Erd\H{o}s--Tur\'an inequality, the Koksma--Sz\"usz inequality links the discrepancy of a sequence of points to certain
exponential sums.

In this section, all implied constants may depend only on the dimension $\dimens$.

\begin{lem}\label{lem:K-S}
For any integer $M \ge 1$, we have
\[
 \Delta(S, \cB) \ll \frac{1}{M}
+\frac{1}{\#S}\sum_{\substack{\vm \in \mathbb{Z}^d\\ 0<\|\vm\|\le M}}\frac{1}{r(\vm)}
\left| \sum_{s\in S} \e{(\langle \vm, s\rangle)}\right|,
\]
where $ \langle \vm, s\rangle$ denotes the inner product in $\R^\dimens$, 
\[
\|\vm\|=  \max_{j=1, \ldots, \dimens}\left|m_j\right| \mand r(\vm) =  \prod_{j=1}^\dimens \(\left|m_j\right| + 1\). 
\]
\end{lem}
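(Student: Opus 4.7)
The plan is to bracket the characteristic function $\chi_B$ of the box $B$ in~\eqref{eq:Box-B-sDim} between two trigonometric polynomials of coordinate-wise degree at most $M$, so that the discrepancy splits into a bias term controlled by the $L^1$ distance from $\chi_B$ and a fluctuation term expressible through exponential sums over $s\in S$ at nonzero frequencies $\vm$ with $\|\vm\|\le M$.

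First, in each coordinate I would invoke the one-dimensional Selberg--Beurling (Vaaler) construction: for every interval $I_j = [\xi_j,\xi_j+\alpha_j]\subseteq \R/\Z$ there exist trigonometric polynomials $v^{\pm}_j$ of degree at most $M$ with $v^{-}_j\le \chi_{I_j}\le v^{+}_j$ pointwise, with $v^{+}_j\ge 0$, with $\int(v^{+}_j-v^{-}_j)\ll 1/(M+1)$, and with Fourier coefficients satisfying
\[
|\widehat{v^{\pm}_j}(m)| \ll \min\!\left(\alpha_j+\tfrac{1}{M+1},\; \tfrac{1}{|m|+1}\right),
\]
the zeroth coefficient being $\alpha_j+O(1/(M+1))$. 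I would then form the tensor product $T^{+}(\bm{\xi})=\prod_{j=1}^{\dimens} v^{+}_j(\xi_j)$, a trigonometric polynomial with Fourier support contained in $\|\vm\|\le M$. Non-negativity of each factor forces $T^{+}(\bm{\xi})\ge \prod_{j}\chi_{I_j}(\xi_j)=\chi_B(\bm{\xi})$, while the Fourier coefficients factor as $\widehat{T^{+}}(\vm)=\prod_{j}\widehat{v^{+}_j}(m_j)$, giving in particular $|\widehat{T^{+}}(\vm)|\ll 1/r(\vm)$.

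Next, summing the majorant over $S$,
\[
\frac{\#(S\cap B)}{\#S}-\mu(B) \;\le\; \frac{1}{\#S}\sum_{s\in S}T^{+}(s)-\mu(B),
\]
and expanding $T^{+}$ as a finite Fourier series, the zero frequency contributes $\int_{(\R/\Z)^{\dimens}}T^{+}-\mu(B)\ll 1/M$, while the nonzero frequencies contribute
\[
\sum_{0<\|\vm\|\le M}\widehat{T^{+}}(\vm)\cdot\frac{1}{\#S}\sum_{s\in S}\e(\langle \vm,s\rangle),
\]
which, after inserting $|\widehat{T^{+}}(\vm)|\ll 1/r(\vm)$ and taking absolute values inside, matches the right-hand side of the claim. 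A symmetric argument with a minorant $T^{-}$ supplies the matching lower bound, after which taking the supremum over boxes $B\in\cB$ yields $\Delta(S,\cB)$ on the left.

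The main technical point is making the tensor-product argument rigorous for the minorant, since the $v^{-}_j$ need not be non-negative and hence $\prod_{j}v^{-}_j$ need not minorize $\chi_B$. The cleanest workaround is to apply the upper-bound argument to the complement $B^{c}=(\R/\Z)^{\dimens}\setminus B$, which can be partitioned into $O_{\dimens}(1)$ boxes of the form~\eqref{eq:Box-B-sDim}, and to combine the resulting upper bounds via the identity $\#(S\cap B)/\#S-\mu(B)=-\bigl(\#(S\cap B^{c})/\#S-\mu(B^{c})\bigr)$; alternatively, the telescoping identity
\[
\chi_B-\prod_{j}v^{-}_j \;=\; \sum_{k=1}^{\dimens}\Big(\prod_{i<k}v^{+}_i\Big)(\chi_{I_k}-v^{-}_k)\Big(\prod_{i>k}\chi_{I_i}\Big)
\]
combined with non-negativity of $v^{+}_i$ and the $L^1$ estimate for $\chi_{I_k}-v^{-}_k$ achieves the same end directly.
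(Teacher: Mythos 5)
The paper does not prove Lemma~\ref{lem:K-S}: it is quoted as the classical Koksma--Sz\"usz inequality, with references to Koksma, Sz\"usz and Drmota--Tichy~\cite[Theorem~1.21]{DrTi}. Your proposal therefore supplies a proof where the paper gives none, and it follows what is essentially the standard route: the Beurling--Selberg/Vaaler extremal trigonometric polynomials $v_j^{\pm}$ in each coordinate, a tensor-product majorant $T^{+}=\prod_j v_j^{+}$ (valid because $v_j^{+}\ge 0$ and $v_j^{+}\ge\chi_{I_j}$), the factorization $\widehat{T^{+}}(\vm)=\prod_j\widehat{v_j^{+}}(m_j)$ giving $|\widehat{T^{+}}(\vm)|\ll 1/r(\vm)$ on the support $\|\vm\|\le M$, and control of the zeroth coefficient by $\mu(B)+O_d(1/M)$. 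This is correct, and your primary fix for the minorant --- decompose $B^c$ into the $2^{\dimens}-1$ boxes $\prod_{j\in T}I_j^{c}\times\prod_{j\notin T}I_j$ ($\emptyset\neq T\subseteq\{1,\dots,\dimens\}$), majorize each, and combine via $\#(S\cap B)/\#S-\mu(B)=-\sum_k\bigl(\#(S\cap B_k)/\#S-\mu(B_k)\bigr)$ --- is sound, since the implied constants in the statement are allowed to depend on $\dimens$.

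One caveat: the ``telescoping identity'' you offer as an alternative to the complement argument is not in fact an identity once $\dimens\ge 2$. At $\dimens=2$ the difference between the two sides equals $(v_1^{-}-v_1^{+})(\chi_{I_2}-v_2^{-})$, which is generically nonzero; what you actually obtain is a one-sided inequality, and even that is an upper bound on $\chi_B-\prod_j v_j^{-}$ by functions involving the non-polynomial factors $\chi_{I_i}$, so it does not directly produce a trigonometric-polynomial minorant of $\chi_B$. The genuine telescoping $\chi_B-\prod_j v_j^{-}=\sum_k\bigl(\prod_{i<k}v_i^{-}\bigr)(\chi_{I_k}-v_k^{-})\bigl(\prod_{i>k}\chi_{I_i}\bigr)$ is an identity, but there the prefactors $v_i^{-}$ may be negative, so no sign information survives. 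In short: the complement route you name as the ``cleanest workaround'' is the one to use; the telescoping alternative would need to be reworked (for instance by first majorizing $\chi_{I_i}\to v_i^{+}$ and $\chi_{I_k}-v_k^{-}\to v_k^{+}-v_k^{-}$ and then verifying that the resulting correction really does turn $T^{+}$ into a minorant, which is not automatic).
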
  

Next we formulate~\cite[Theorems~1 and~3]{BMV}, which give upper 
and lower bounds for
$\#(S \cap B) $  for individual boxes $B$. 
 To simplify the 
exposition, we present these bounds without explicit constants and 
with some other simplifying assumptions.  

\begin{lem}\label{lem:BMW-corollary} Let   
 $S \subseteq  
\(\R/\Z\)^\dimens$  be a finite set  and let $B$  be a box as 
  in~\eqref{eq:Box-B-sDim}. 
 Assume that $L_i\in \N$ satisfies $\alpha_i L_i\geq 2$ for $i=1,\ldots,\dimens$. Then
\[
  \#(S \cap B)  = \mu\(B\) \#S\(1 + O(\max\{E, E^2\}\)
\]
  where 
\[
E  = \sum_{i=1}^\dimens\alpha_i^{-1} L_i ^{-1}
+ \frac{1}{\#S}
\sum_{\substack{\vm \in \cL\\ \vm \ne \mathbf 0 }}
\left| \sum_{s \in S} \e\( \langle \vm, s\rangle\)\right|,
\]
in which the sum is taken over all non-zero integer vectors $\vm$ 
in the box $\cL =  [-L_1, L_1] \times \cdots\times  [-L_\dimens, L_\dimens]$. 
\end{lem}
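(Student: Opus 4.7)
The plan is to sandwich the indicator function $\mathbf{1}_B$ between two non-negative trigonometric polynomials on $(\R/\Z)^{\dimens}$ whose Fourier supports lie entirely in $\cL$, sum these majorant and minorant over $S$, and then apply Fourier inversion to extract a main term of size $\mu(B)\#S$ together with an error built from the exponential sums $\sum_{s\in S}\e(\langle\vm,s\rangle)$.

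For each coordinate $i \in \{1,\ldots,\dimens\}$ I would invoke the classical periodised Beurling--Selberg construction to produce real trigonometric polynomials $f_i^\pm \colon \R/\Z\to\R$ of degree at most $L_i$ satisfying
\[
0 \le f_i^-(x) \le \mathbf{1}_{[\xi_i,\xi_i+\alpha_i]}(x) \le f_i^+(x),
\]
with zeroth Fourier coefficient $\widehat{f_i^\pm}(0)=\alpha_i+O(1/L_i)$ and uniform bound $|\widehat{f_i^\pm}(m)| \ll \alpha_i+1/L_i$ for every $m\in\Z$. The hypothesis $\alpha_i L_i\ge 2$ is exactly what ensures the minorant can be taken non-trivial and non-negative, and keeps each zeroth coefficient of the same order as $\alpha_i$. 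Setting
\[
F^\pm(\vx) = \prod_{i=1}^{\dimens} f_i^\pm(x_i),
\]
pointwise non-negativity of the factors gives $F^-(\vx)\le \mathbf{1}_B(\vx)\le F^+(\vx)$, while the Fourier coefficients factor as $\widehat{F^\pm}(\vm) = \prod_i \widehat{f_i^\pm}(m_i)$ and vanish outside $\cL$. Fourier inversion then yields
\[
\sum_{s\in S}F^\pm(s) = \widehat{F^\pm}(\mathbf{0})\,\#S + \sum_{\substack{\vm\in\cL\\ \vm\ne\mathbf{0}}}\widehat{F^\pm}(\vm)\sum_{s\in S}\e(\langle\vm,s\rangle),
\]
and sandwiching $\#(S\cap B) = \sum_{s\in S}\mathbf{1}_B(s)$ between the analogous sums for $F^-$ and $F^+$ produces matching upper and lower bounds.

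The main term expands as $\widehat{F^\pm}(\mathbf{0}) = \prod_i(\alpha_i+O(1/L_i)) = \mu(B)\prod_i(1+O(1/(\alpha_i L_i)))$, contributing the shape component $E_0 := \sum_i \alpha_i^{-1} L_i^{-1}$ appearing in $E$. For the Fourier tail I would use the positivity bound $|\widehat{F^\pm}(\vm)| \le \widehat{F^\pm}(\mathbf{0}) \ll \mu(B)$, valid because $F^\pm\ge 0$ and because $\alpha_i L_i\ge 2$ keeps $\prod_i(1+O(1/(\alpha_i L_i)))$ bounded. Pulling this uniform factor outside the sum over $\vm\in\cL\setminus\{\mathbf{0}\}$ produces precisely the exponential-sum component of $E$ multiplied by $\mu(B)\#S$.

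The most delicate point, and the source of the $\max\{E,E^2\}$ formulation, is the multiplicative expansion of $\prod_i (1+O((\alpha_i L_i)^{-1}))$, which via the standard cross-term bookkeeping $|\prod_i(1+t_i)-1| \le (\sum_i|t_i|)\exp(\sum_i|t_i|)$ evaluates to $1+O(\max\{E_0,E_0^2\})$. Combined with the Fourier estimate, this yields a total error of size $O(\max\{E,E^2\})\cdot \mu(B)\#S$: the linear terms dominate when $E\ll 1$, while the quadratic cross terms take over once $E\gtrsim 1$. Verifying that these cross terms assemble cleanly into the single uniform expression $\max\{E,E^2\}$, so that both regimes are covered by one statement, is the main bookkeeping obstacle I anticipate.
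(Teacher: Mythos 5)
Your plan for the majorant is sound: since each one-dimensional Selberg majorant $f_i^+$ can indeed be taken non-negative, the product $F^+=\prod_i f_i^+$ is a genuine majorant of $\mathbf{1}_B$ with Fourier support in $\mathcal L$, and your Fourier-inversion computation of the upper bound goes through. The gap is entirely on the minorant side, and it is fatal as stated. You assert $0\le f_i^-(x)\le \mathbf{1}_{[\xi_i,\xi_i+\alpha_i]}(x)$, but no nonzero trigonometric polynomial of finite degree can satisfy this: being $\le\mathbf{1}_{[\xi_i,\xi_i+\alpha_i]}$ forces $f_i^-\le 0$ off the interval, so combined with $f_i^-\ge 0$ it would vanish on a set of positive measure, which is impossible for a nonzero finite trigonometric polynomial. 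The condition $\alpha_i L_i\ge 2$ guarantees the minorant has positive \emph{mean}, not that it is pointwise non-negative. Consequently $F^-=\prod_i f_i^-$ need not minorise $\mathbf{1}_B$ (signs can conspire off $B$), and your subsequent bound $|\widehat{F^-}(\vm)|\le\widehat{F^-}(\mathbf 0)$, which you justify by positivity of $F^-$, also fails. This is precisely the obstruction that Barton, Montgomery and Vaaler overcome in~\cite{BMV} with a more elaborate multi-dimensional minorant (or, equivalently, a telescoping of the form $\mathbf{1}_B\ge\prod_i f_i^+-\sum_j (f_j^+-f_j^-)\prod_{i\ne j}f_i^+$ with careful bookkeeping of the zeroth coefficient); the paper here does not re-derive those constructions but cites~\cite[Theorems~1 and~3]{BMV} as black boxes, and devotes its own proof to an approximation step removing the integrality and no-boundary-point hypotheses present in those theorems. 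So your route diverges from the paper's, and as written it breaks at the crucial minorant step. If you want a self-contained proof you would need to import the genuine BMV minorant or run the telescoping identity, after which the $\max\{E,E^2\}$ bookkeeping you describe is the right shape.
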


\begin{proof} We first consider the case when there are no points of $S$ on the boundary of $B$, and where 
\begin{equation}
\label{eq:IntegrCond}
\alpha_iL_i \in \N, \qquad i=1, \ldots, \dimens, 
\end{equation}
to make it closer to the setting of~\cite[Theorems~1 and~3]{BMV}. 

We merely explain deviations from the setting 
of~\cite[Theorems~1 and~3]{BMV}. First we replaced $L_i+1$ with $L_i$, which means we just add more 
terms over which we average absolute values of the exponential sums in $E$. 
Now the lower bound 
\[
  \#(S \cap B)  \ge \mu\(B\) \#S\(1 + O(E)\)
\]
follows from~\cite[Theorem~1]{BMV}.

To derive the upper bound for $  \#(S \cap B)$ 
we note that we can assume 
$\alpha_i^{-1} L_i ^{-1} < 1/2$, $i=1, \ldots, \dimens$,  as otherwise the result is trivial. 
Hence, we have the condition~\cite[Equation~(1.10)]{BMV} with 
\[
\delta = \prod_{i=1}^\dimens (1 + \alpha_i^{-1} L_i ^{-1}) - 1 
\ll  \sum_{i=1}^{\dimens}\alpha_i^{-1} L_i ^{-1}
\]
and the bound 
\[
  \#(S \cap B) \le \mu\(B\) \#S\(1 + O(E)\)
\]
follows from~\cite[Theorem~3]{BMV}. 
Hence, under the condition~\eqref{eq:IntegrCond} we have 
\begin{equation}
\label{eq:Bound T-IntCond}
  \#(S \cap B) \le \mu\(B\) \#S\(1 + O(E)\). 
\end{equation}

To drop the condition~\eqref{eq:IntegrCond}, we use a simple approximation argument.  We want to construct sets $B^-\subseteq B\subseteq B^+$ approximating $B$ and satisfying the integrality conditions as in~\eqref{eq:IntegrCond}.  We do this as follows: 
  Choose an integer $2\leq b_i\leq L_i$ such that
\[\frac{b_i}{L_i}\leq \alpha_i<\frac{b_i+1}{L_i},
\] and define
\[ 
\alpha_i^-=\frac{b_i-1}{L_i}, \mand \alpha_i^+=\min\left\{\frac{b_i+2}{L_i},1\right\} .
\]
Then we let
\[
 B^\pm = [\xi_1^\pm, \xi_1^\pm +\alpha_1^\pm ] \times \ldots  \times 
  [\xi_\dimens^\pm, \xi_\dimens^\pm +\alpha_\dimens^\pm ]  , 
\]
where $\xi_i^-\geq\xi_i$, $\xi_i^+\leq\xi_i$ are chosen such that $|\xi_i-\xi_i^\pm|\leq (2L_i)^{-1}$ and such that the boundary of $B^\pm$ contains no points from $S$. Since we only need to avoid finitely many points this is certainly  possible.  Then $B^-\subseteq B\subseteq B^+$ and 
the boxes $B^\pm$  both satisfy the integrality conditions as   in~\eqref{eq:IntegrCond}, with the integers $L_1,\ldots,L_d$. Also, we have $|\alpha_i^+-\alpha_i^-|\leq 3/L_i$, so 
\begin{align*}
  |B^\pm|  &=  \prod_{i=1}^\dimens \(\alpha_i + O (L_i ^{-1})\)
 =  \mu\(B\) \prod_{i=1}^\dimens \(1 + O \(\frac{1}{ \alpha_i  L_i }\)\)\\ &=\mu\(B\) \( 1 + O\(  \sum_{i=1}^\dimens\alpha_i^{-1} L_i ^{-1}\)\)=\mu\(B\) \( 1 + O\( E\)\).
  \end{align*}
 Furthermore, 
\[\sum_{i=1}^d\frac{1}{\alpha_i^\pm L_i}\leq 2\sum_{i=1}^d\frac{1}{\alpha_i L_i},
\]
so defining the corresponding quantity $E^{\pm}$ for the approximate boxes $B^{\pm}$, we have   $E^{\pm} =O(E)$. 
Clearly, 
\[ \#(S \cap B^-)\leq  \#(S \cap B)\leq \#(S \cap B^+).
\]
Since~\eqref{eq:Bound T-IntCond} gives 
\[\#(S \cap B^{\pm})=|B^\pm|\#S\(1 + O(E^{\pm})\)=\mu\(B\) \#S\(1 + O(E)\)^2
\] we get the result.
\end{proof}

\begin{rem} Clearly, for $E \le 1$, which is certainly the most interesting case,  
the error term in the bound of Lemma~\ref{lem:BMW-corollary} simplifies as $O(E)$. 
However, it is conceivable that in some scenarios we have $E > 1$, in which case  it may
still be used as an upper bound 
(rather than a genuine asymptotic formula). 
\end{rem}

\subsection{Point distribution in balls and exponential sums}
\label{sec:disc}    

For the following connection between the number elements of a  finite sequence $ S\subseteq \(\R/\Z\)^2$ in a given disc 
and exponential sums (similar to Lemmas~\ref{lem:K-S} and~\ref{lem:BMW-corollary}) 
is given by Harman~\cite[Theorem~2]{Harm} (as before we formulate it in an arbitrary 
dimension $d$, while we only use it for $d=2$).

\begin{lem}\label{lem:Harm} Let   
 $S \subseteq \(\R/\Z\)^d$  be a finite set  and let $D$  be a ball
\[
  D = \{\bm \xi :~\| \bm \xi - \bm \omega  \| < R \}\subseteq \(\R/ \Z\)^d
 \]
 of radius $R < 1/2$. 
 For any  real $L\ge 1$ we have
\[
  \#(S \cap D)   = \mu\(D\) \#S\(1 + O(E)\), 
\]
  where 
\[
E =  \frac{R^{d-1}}{ L} + \frac{1}{L^d} + \frac{1}{ \#S}  
 \sum_{\substack{\vm\in \Z^2\\0< \|\vm\|_2 \le L} }
 \(\frac{1}{L^d} + \min\left\{R^d, \frac{R^{(d-1)/2}}{\|\vm\|_2^{(d+1)/2}}\right\} \)
 \left| \sum_{s \in S} \e\( \langle \vm, s\rangle\)\right|
\]
and   $\|\vm\|_2 $ denotes the Euclidean norm of $\vm$. 
\end{lem}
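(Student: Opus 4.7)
The plan is to sandwich the indicator function $\chi_D$ between two real-valued functions $\psi_-,\psi_+$ on $(\R/\Z)^d$ whose non-zero Fourier coefficients are supported on $\|\vm\|_2 \le L$ and essentially inherit the Bessel-type decay of $\widehat{\chi_D}$, and then to use the Fourier expansion to convert $\#(S \cap D)$ into a main term plus exponential sums over $s \in S$.

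First, I would construct majorant/minorant functions $\psi_\pm$ with the following properties: (i) pointwise sandwiching $\psi_-(\vx) \le \chi_D(\vx) \le \psi_+(\vx)$; (ii) Fourier support in $\|\vm\|_2 \le L$; (iii) $\widehat{\psi}_\pm(\vec{0}) = \mu(D) + O(R^{d-1}L^{-1} + L^{-d})$; and (iv) $|\widehat{\psi}_\pm(\vm)| \ll L^{-d} + \min\{R^d,\, R^{(d-1)/2}\|\vm\|_2^{-(d+1)/2}\}$ for $0 < \|\vm\|_2 \le L$. The prototype is to take the characteristic function of a ball concentric with $D$ of radius $R \pm c/L$ and convolve it with a non-negative Fej\'er-type kernel of total mass $1$ whose Fourier transform is supported in the ball $\{\|\bm\xi\|_2 \le L\}$; the multi-dimensional Selberg-type extremal theory of Holt--Vaaler (or Carneiro--Littmann) provides a clean implementation of this construction in arbitrary dimension. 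The $R^{d-1}L^{-1}$ term in (iii) is the volume of the boundary annulus of width $\sim 1/L$, while (iv) transfers the classical Bessel asymptotic
\[
\bigl|\widehat{\chi_D}(\bm\xi)\bigr| \ll \min\left\{R^d,\; \frac{R^{(d-1)/2}}{\|\bm\xi\|_2^{(d+1)/2}}\right\}
\]
to $\widehat{\psi}_\pm$, up to the additive smoothing error $L^{-d}$.

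Second, once $\psi_\pm$ are in hand, the sandwich gives $\sum_{s \in S}\psi_-(s) \le \#(S \cap D) \le \sum_{s \in S}\psi_+(s)$, and the Fourier expansion yields
\[
\sum_{s \in S}\psi_\pm(s) = \widehat{\psi}_\pm(\vec{0})\cdot\#S + \sum_{0 < \|\vm\|_2 \le L}\widehat{\psi}_\pm(\vm)\sum_{s \in S}\e(\langle \vm, s\rangle).
\]
Using (iii) to isolate the main term $\mu(D)\#S$ (absorbing an $O((R^{d-1}L^{-1} + L^{-d})\#S)$ contribution into the error), and applying (iv) together with the triangle inequality on the remaining frequencies, reproduces precisely the error term $\mu(D)\#S\cdot E$ claimed in the lemma.

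Third, the principal technical obstacle is the construction of $\psi_\pm$: reconciling the three demands of pointwise sandwiching, finite Fourier support in a ball of radius $L$, and Bessel-type coefficient decay within a single function. For $d = 1$ this is classical Selberg--Beurling theory; in higher dimensions one needs extremal majorants/minorants adapted to radial targets. Harman's original argument in~\cite{Harm} proceeds by an explicit convolution tailored so as to output precisely the shape of $E$ in the statement; once this input is secured, the remainder of the argument is routine Fourier bookkeeping.
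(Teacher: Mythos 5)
The paper does not prove this lemma at all: it is quoted verbatim (in $d$-dimensional form) as Harman~\cite[Theorem~2]{Harm}, and no proof is supplied in the paper. So there is strictly nothing in the paper to compare your argument against; the question is only whether your sketch is a plausible route to Harman's result.

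Your proposed route — sandwich $\chi_D$ between bandlimited majorant and minorant functions $\psi_\pm$, then apply Fourier expansion and the triangle inequality — is the correct high-level strategy and is indeed the one Harman follows. You are also honest that the proposal delegates its only hard step (the construction of $\psi_\pm$ satisfying simultaneously pointwise sandwiching, controlled mass defect, and Bessel-type coefficient decay) to Harman's original argument, so as a \emph{blind} proof it is more of an annotated citation than an independent derivation. Two more specific cautions. First, referring to Holt--Vaaler or Carneiro--Littmann extremal ball majorants is not what Harman does: his argument is an elementary explicit smoothing (convolution of a slightly dilated or shrunken ball with a Fej\'er-type kernel followed by a frequency truncation), and the term $L^{-d}$ in $E$ — appearing both as a standalone summand and inside the $\vm$-sum — is precisely the fingerprint of that construction (the kernel's coefficients are not supported in $\{\|\vm\|_2\le L\}$; one truncates there and pays the tail, which is how $L^{-d}$ enters in both places). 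An appeal to genuinely bandlimited extremal majorants would not naturally produce the $L^{-d}$ inside the sum. Second, your displayed assertion that the error is absorbed into $\mu(D)\#S\cdot E$ needs care: for small $R$ the term $R^{d-1}L^{-1}\cdot\#S$ coming from $\widehat{\psi}_\pm(\mathbf 0)-\mu(D)$ is not $O(\mu(D)\#S\cdot E)$ but only $O(\#S\cdot E)$. That is how the statement is used later in the paper (compare the proof of Theorem~\ref{thm:SmallDisc}, where the error is divided by $N(X)$ rather than by $\pi R^{2}N(X)$), so the bookkeeping should be phrased as $\#(S\cap D)=\mu(D)\#S+O(\#S\cdot E)$. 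With those adjustments the plan is sound, but as written it hands the load-bearing construction to the black box you are trying to prove.
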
 

\begin{rem}\label{rem:Humph} We note that an alternative treatment of the ball discrepancy 
is provided by Humphries~\cite{Hump}. This was used in an earlier version of this paper and leads to the same exponents in Theorem~\ref{thm:SmallDisc}. As the referee remarked, the present approach,is slightly shorter. 
\end{rem}

\section{Some geometric arguments}

\subsection{Approximation of well-shaped sets by a union of squares}
\label{sec:approx-alternative}
The following definitions and constructions work in any dimension
and are adapted from~\cite[Section~3]{Schm} and~\cite[Section~2]{KerShp}. However, for simplicity we restrict ourselves to dimension $2$. 

Let $\|\vec{v}\|_2$ denote the Euclidean norm of $\vec{v}\in\R^2$, and define the distance between $\vec{u}\in \R^2$ and any $\Omega\subseteq \R^2$  to be
\[
  \dist(\vec{u},\Omega) = \inf_{\vec{w} \in\Omega}
  \|\vec{u} - \vec{w}\|_2.
\] 

Let 
\[
\sfB=\{\vec{v}\in\R^2:~\|\vec{v}\|_2\leq 1\} \mand\sfU=[0,1)^2
\]
be the closed unit ball and the half-open unit square, respectively. 
Consider a subset $\Omega\subseteq \sfU$. For any $\varepsilon>0$ we define the $\varepsilon$-extended set $\Omega_{\varepsilon}$, and the $\varepsilon$-restricted set $\Omega_{-\varepsilon}$ by 
\begin{align*}
 &\Omega_{\varepsilon}=\{\vec u\in \sfU :~\dist(\vec{u},\Omega) < \varepsilon\},\\
 & \Omega_{-\varepsilon}=\{\vec u\in \Omega :~\dist(\vec{u},\sfU\backslash\Omega) \geq  \varepsilon\}.
\end{align*} 
We then define the sets
\begin{align*}
&\Omega_\varepsilon^{+}  =\Omega_{\varepsilon}\backslash \Omega=\{\vec u\in \sfU\backslash \Omega :~\dist(\vec{u},\Omega) < \varepsilon\}
, \\ 
&\Omega_\varepsilon^{-} =\Omega\backslash \Omega_{-\varepsilon} =\{\vec u\in \Omega :~\dist(\vec{u},\sfU\backslash\Omega) < \varepsilon\}.
\end{align*} 
The  Lebesgue measure of these sets, $\mu(\Omega_\varepsilon^{\pm})$, quantifies how much $\Omega$ has been extended or restricted by $\Omega_{\pm\varepsilon}$, respectively.
For a given $\eta>0$ we say that a set $\Omega\subseteq \sfU$ is \emph{$\eta$-well-shaped} if for all $\varepsilon>0$ we have 
\[
\mu\(\Omega_\varepsilon^{\pm }\)  \le \eta \varepsilon. 
\] 
There is another  
subset of  $\Omega$ that is useful. This is defined by
\[
  \widetilde \Omega_{-\varepsilon}=\{\vec u\in \Omega :~\vec u+\varepsilon \sfB \subseteq \Omega \}.
\] 
  One shows that 
  \begin{equation}\label{eq:stacking-of-restrictions}
  \widetilde\Omega_{-\varepsilon}\subseteq \Omega_{-\varepsilon}\subseteq  
  \widetilde\Omega_{-\varepsilon_0}\quad \textrm{ if }0<\varepsilon_0<\varepsilon.\end{equation}

We now explain how to approximate a well-shaped set $\Omega$ by a union of squares:
Fix irrational $\alpha, \beta \in \R\backslash \Q$. For positive $k$, let $\cQ(k)$ be the set of squares $\Gamma$ of the form
\[
\Gamma_{u,v,k}=\left[\alpha+\frac{u}{k},\alpha+\frac{u+1}{k}\right]\times \left[\beta+ \frac{v}{k},\beta+ \frac{v+1}{k}\right]
\subseteq \R^2,
\]
where $u, v \in \Z$. The role of the irrationality condition is to ensure that a rational point is  always in the interior of the squares in $\cQ(k)$ and therefore in particular in precisely one square.
Furthermore, let 
\[
\cQ_\Omega(k)=\{\Gamma\in \cQ(k):~\Gamma\subseteq \Omega\}  
\]
be the set of squares in $\cQ(k)$ that are contained in $\Omega$.
We claim that for any $\varepsilon>\sqrt{2}/k$  we have 
\begin{equation}\label{eq: inclusions}
 \Omega_{-\varepsilon}\subseteq \widetilde \Omega_{-\frac{\sqrt{2}}{k}}\subseteq \bigcup_{\Gamma\in \cQ_\Omega(k)}\Gamma\subseteq \Omega. 
\end{equation}
The first inclusion follows from~\eqref{eq:stacking-of-restrictions}.
To see that the second inclusion holds, let  $\vec x\in \widetilde \Omega_{-\frac{\sqrt{2}}{k}}$, and choose $u_0, v_0\in\Z$ such that $\vec x\in \Gamma_{k,u_0,v_0}$. For $\vec y\in \Gamma_{k,u_0,v_0}$ we have 
\[
\|\vec x-\vec y\|_2\leq \frac{\sqrt{2}}{k}, 
\] that is, $\vec y\in \vec x + \sqrt{2} k^{-1}\sfB$, so $\vec y\in\Omega$ since $\vec x\in \widetilde \Omega_{-\sqrt{2}/k}$. This proves that $\Gamma_{k,u_0,v_0}\subseteq \Omega$ so $\Gamma_{k,u_0,v_0}\in \cQ_\Omega(k)$, which shows the second inclusion in~\eqref{eq: inclusions}.  The third inclusion is clear from the definition of $\cQ_\Omega(k)$. 

From~\eqref{eq: inclusions} we find, since $\mu(\Gamma)=k^{-2}$ for $\Gamma \in \cQ(k)$, that if $\Omega$ is $\eta$-well-shaped, then for any $\delta>0$ we have
\begin{equation}\label{eq:another-inequality}
  - \eta \frac{(\sqrt{2}+\delta)}{k}+\mu(\Omega)\leq k^{-2} \#\cQ_\Omega(k) \leq \mu(\Omega) 
\end{equation}
and by letting $\delta$ tend to zero the same holds also for $\delta=0$.
We now describe how to pack an $\eta$-well-shaped set $\Omega$ with smaller and smaller squares: Let $\cB_1 = \cQ_\Omega(2)$ and for $i =2,3, \ldots$, let $\cB_i$ be the set of  squares $\Gamma \in \cQ_\Omega(2^i)$ that are not contained in any square from $\cQ_\Omega(2^{i-1})$.

\begin{figure}[ht!]
\begin{tikzpicture}[scale=5]   
\def\drawSquare#1#2#3{
\pgfmathparse{#1/2^#3}
\let\x\pgfmathresult
\pgfmathparse{#2/2^#3}
\let\y\pgfmathresult
\pgfmathparse{1/2^#3}
\let\s\pgfmathresult
\draw[thin] (\x,\y) rectangle +(\s,\s);
}

    \draw (0,0) -- (1,0); 
    \draw (0,0) -- (0,1); 
    \draw[ dashed] (1,0) -- (1,1); 
    \draw[ dashed] (0,1) -- (1,1); 




    \draw[thin] plot[smooth cycle, tension=0.8] 
        coordinates {(0.1,0.5) (0.3,0.8) (0.6,0.9) (0.9,0.7) (0.7,0.2) (0.4,0.1)};

    \foreach \i/\j/\n in {
        1/1/2, 1/2/2, 2/2/2, 
        1/4/3, 3/1/3, 4/1/3, 4/2/3, 4/3/3, 4/6/3, 5/3/3, 6/5/3,
        2/7/4, 3/5/4, 3/6/4, 3/7/4, 3/10/4, 5/3/4, 6/12/4, 7/12/4,
        7/13/4, 10/3/4, 10/4/4, 10/5/4, 10/12/4, 10/13/4, 11/5/4, 11/12/4, 
        12/6/4, 12/7/4, 12/8/4, 12/9/4, 12/12/4, 13/9/4,
        5/12/5, 5/13/5, 5/20/5, 7/9/5, 7/22/5, 8/7/5, 9/6/5, 9/7/5, 9/24/5,
        10/24/5, 11/5/5, 11/24/5, 11/25/5, 12/26/5, 13/26/5, 14/3/5, 15/3/5,
        16/3/5, 17/3/5, 20/5/5, 22/8/5, 22/9/5, 22/26/5, 22/27/5, 23/9/5, 23/26/5,
        24/11/5, 24/26/5, 26/14/5, 26/15/5, 26/16/5, 26/17/5, 26/24/5, 26/25/5, 27/17/5,
        7/30/6, 7/31/6, 7/32/6, 7/33/6, 7/34/6, 7/35/6, 7/36/6, 7/37/6, 8/27/6,
        9/25/6, 9/26/6, 9/27/6, 9/40/6, 9/41/6, 10/23/6, 10/42/6, 11/21/6, 11/22/6, 
        11/23/6, 11/42/6, 11/43/6, 12/19/6, 12/44/6, 13/18/6, 13/19/6, 13/44/6, 13/45/6, 
        14/17/6, 14/46/6, 15/16/6, 15/17/6, 15/46/6, 17/48/6, 19/50/6, 20/11/6, 20/50/6, 
        21/10/6, 21/11/6, 21/50/6, 21/51/6, 22/9/6, 22/52/6, 23/8/6, 23/9/6, 23/52/6, 
        23/53/6, 25/7/6, 25/54/6, 26/7/6, 26/54/6, 27/7/6, 27/6/6, 27/54/6, 27/55/6, 
        29/5/6, 29/56/6, 30/5/6, 30/56/6, 31/5/6, 31/56/6, 32/5/6, 32/56/6, 33/5/6, 
        33/56/6, 33/57/6, 34/5/6, 34/56/6, 35/5/6, 35/56/6, 36/56/6, 36/6/6, 36/7/6, 
        37/56/6, 37/7/6, 38/56/6, 38/7/6, 39/56/6, 40/9/6, 40/56/6, 42/11/6, 42/11/6, 
        44/14/6, 44/15/6, 45/15/6, 46/16/6, 46/17/6, 46/54/6, 47/54/6, 48/19/6, 48/20/6, 
        48/21/6, 48/54/6, 49/21/6, 50/22/6, 50/23/6, 50/52/6, 50/53/6, 51/52/6, 52/26/6, 
        52/27/6, 54/30/6, 54/31/6, 54/32/6, 54/33/6, 54/48/6, 54/49/6, 54/50/6, 55/33/6, 
        55/48/6, 56/36/6, 56/37/6, 56/38/6, 56/39/6, 56/40/6, 56/41/6, 56/42/6, 56/43/6, 
        56/44/6, 56/45/6, 56/46/6
    } {
        \drawSquare{\i}{\j}{\n}
    }
\end{tikzpicture}
\caption{$\Omega$ and $\cB(i)$ in $\sfU$ for $i\leq 6$}
\end{figure}
We note that
\begin{equation}\label{B's-cover}
  \bigcup_{\Gamma\in \cQ_\Omega(2^n)}\Gamma=\bigcup_{i=1}^n\bigcup_{\Gamma\in \cB_i}\Gamma.
\end{equation}
Using this and~\eqref{eq: inclusions}  we see that if $\Omega$ is $\eta$-well-shaped then 
\[
2^{-2n}\#\cB_n+2^{-2(n-1)}\#\cQ_\Omega(2^{n-1})\leq \mu(\Omega)
\]
 and combining this with~\eqref{eq:another-inequality} we see that 
\begin{equation}
\label{eq: bound B}
\#\cB_n\leq \eta 2^{n+3/2}.
\end{equation}
Using again~\eqref{B's-cover} and~\eqref{eq: inclusions} we see also that if $\Omega$ is $\eta$-well-shaped then 
\begin{equation}\label{eq:sum of Bi}
  0\leq \mu(\Omega)-\sum_{i=1}^n\sum_{\Gamma\in \cB_i}\mu(\Gamma)\leq\eta 2^{-n+1/2}.
\end{equation}

\subsection{Approximations of convex sets} We now assume that $\Omega\subseteq \sfU$ is convex. In this case the closure $\overline \Omega$, interior $\Omega^\circ$,
restrictions and extensions $\Omega_{\pm \varepsilon}$, $\widetilde \Omega_{- \varepsilon}$ are all convex. 

Also recall that the boundary of a convex set has measure zero; see~\cite[Theorem~1]{Lang}.

\begin{lem}\label{lem:convex to well} A convex set $\Omega\subseteq \sfU$ is $(4+\pi)$-well-shaped.
\end{lem}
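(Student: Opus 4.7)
The plan is to bound $\mu(\Omega_\varepsilon^+)$ and $\mu(\Omega_\varepsilon^-)$ separately in terms of the perimeter of $\Omega$, using the Steiner formula outwards and its inner counterpart inwards, and then to exploit the elementary fact that any convex $\Omega \subseteq \sfU$ satisfies $\mathrm{per}(\Omega) \le \mathrm{per}(\sfU) = 4$ by the monotonicity of perimeter for nested planar convex bodies.

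For the outer layer, I would observe that by definition every $u \in \Omega_\varepsilon^+$ lies in $(\overline{\Omega} + \varepsilon\sfB) \setminus \Omega$, so the Steiner formula for planar convex bodies,
\[
\mu(\overline{\Omega} + \varepsilon\sfB) = \mu(\Omega) + \varepsilon\,\mathrm{per}(\Omega) + \pi\varepsilon^{2},
\]
immediately yields $\mu(\Omega_\varepsilon^+) \le 4\varepsilon + \pi\varepsilon^{2}$.

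For the inner layer, the key observation is that if $u \in \Omega_\varepsilon^-$ and $v \in \sfU \setminus \Omega$ realizes $\|u - v\| < \varepsilon$, then by convexity of $\sfU$ the segment $[u,v]$ is contained in $\sfU$ and must cross $\partial\Omega$ at a point $w$ with $\|u - w\| < \varepsilon$. Hence
\[
\Omega_\varepsilon^- \subseteq \{u \in \Omega :~ \dist(u,\partial\Omega) < \varepsilon\} = \Omega \setminus (\Omega \ominus \varepsilon\sfB),
\]
and the inner Steiner-type bound for convex bodies,
\[
\mu(\Omega) - \mu(\Omega \ominus \varepsilon\sfB) \le \varepsilon\,\mathrm{per}(\Omega),
\]
furnishes $\mu(\Omega_\varepsilon^-) \le 4\varepsilon$. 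I expect this inner step to be the main technical point: it can be established either via the tubular parametrization $(s,t)\mapsto \gamma(s) + t\,n(s)$ along $\partial\Omega$, whose Jacobian $1 - t\kappa(s)$ is at most $1$ for convex $\Omega$, or via the coarea formula applied to the distance function, first for a smooth convex approximation of $\Omega$ and then passing to the limit (using that $\partial\Omega$ has measure zero, as already noted before the lemma).

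Assembling the two estimates with a split on the size of $\varepsilon$ gives the conclusion: if $\varepsilon \le 1$ then $\pi\varepsilon^{2} \le \pi\varepsilon$, so $\mu(\Omega_\varepsilon^+) \le (4+\pi)\varepsilon$ and $\mu(\Omega_\varepsilon^-) \le 4\varepsilon \le (4+\pi)\varepsilon$; if $\varepsilon > 1$, both bounds are trivial since $\mu(\Omega_\varepsilon^\pm) \le \mu(\sfU) = 1 \le (4+\pi)\varepsilon$. In either regime $\mu(\Omega_\varepsilon^\pm) \le (4+\pi)\varepsilon$, so $\Omega$ is $(4+\pi)$-well-shaped.
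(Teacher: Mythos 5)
Your proof is correct, and it arrives at the same constant $4+\pi$ by the same two structural ingredients — monotonicity of perimeter for nested planar convex bodies (so $\mathrm{per}(\Omega)\le\mathrm{per}(\sfU)=4$) and a split $\varepsilon\le 1$ versus $\varepsilon>1$ — but the route through the layer bounds differs from the paper's. The paper proves the estimates only for convex polygons by an explicit covering of $\Omega_\varepsilon^-$ by inward rectangles (one per side) and of $\Omega_\varepsilon^+$ by outward rectangles together with circular sectors at the vertices that sum to a full disc by convexity; the general convex case is then delegated to a polygonal approximation (or dismissed as unnecessary for the application via Kuipers--Niederreiter). You instead work directly with a general convex body, using the Steiner formula $\mu(\overline\Omega+\varepsilon\sfB)=\mu(\Omega)+\varepsilon\,\mathrm{per}(\Omega)+\pi\varepsilon^2$ for the outer collar, and an inner Steiner-type inequality $\mu(\Omega)-\mu(\Omega\ominus\varepsilon\sfB)\le\varepsilon\,\mathrm{per}(\Omega)$ for the inner collar after observing (correctly, via convexity of $\sfU$) that $\Omega_\varepsilon^-\subseteq\{u\in\Omega:\dist(u,\partial\Omega)<\varepsilon\}$. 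Your approach is shorter and avoids the polygonal reduction entirely, at the cost of invoking slightly heavier machinery; you are right that the inner bound is the only genuinely nontrivial point, and of your two suggested justifications the coarea one is the more robust: it reduces the bound to $\mathcal H^1(\partial(\Omega\ominus t\sfB))\le\mathrm{per}(\Omega)$ for each $t$, which is again just perimeter monotonicity, whereas the tubular-coordinate argument needs the remark that the foot-point map is injective off the (measure-zero) medial axis so that $1-t\kappa(s)\ge 0$ on its domain. Either way the argument closes. One cosmetic caveat: the Steiner formula is usually stated for closed convex bodies, so you should say explicitly that passing from $\Omega$ to $\overline\Omega$ is harmless because $\partial\Omega$ has Lebesgue measure zero (the paper records this fact just before the lemma precisely so it can be used here).
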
 
\begin{proof}A slight variant of this is stated without proof by Schmidt~\cite[Lemma~1]{Schm}. We prove it for convex polygons and leave the approximation of general convex sets by convex polygons to the reader (compare~\cite[Lemma~2.2]{Stephani}).   
In fact for our application to the isotropic discrepancy it is enough to study only convex polygons; see~\cite[Chapter~2, Theorem~1.5]{KuNi}.
  
Since $\Omega_\varepsilon^\pm\subseteq \sfU$ we have $\mu(\Omega_\varepsilon^\pm)\leq 1\leq (4+\pi)\varepsilon$ if $\varepsilon >1$, so we  assume that $\varepsilon \leq 1$.

Given a convex polygon $\cP$ we can attach a closed rectangle of width $\varepsilon$ to the outside and inside of each side of the polygon. 

\begin{figure}[ht]
  \minipage[t]{0.32\linewidth}
  \strut\vspace*{-\baselineskip}\newline
  \begin{tikzpicture}[scale=3.7]
    \draw (0,0) -- (1,0); 
    \draw (0,0) -- (0,1); 
    \draw[ dashed] (1,0) -- (1,1); 
    \draw[ dashed] (0,1) -- (1,1); 

      
      \draw[thick, dashed] (0.0, 0.08333333333333333) -- (0.0, 0.25);
      \draw[thick] (0.0, 0.25) -- (0.95, 0.95);
      \draw[thick, dashed] (0.95, 0.95) -- (0.8333333333333334, 0.6666666666666666);
      \draw[thick] (0.8333333333333334, 0.6666666666666666) -- (0.25, 0.0);
      \draw[thick, dashed] (0.25, 0.0) -- (0.0, 0.08333333333333333);
      
   \fill[opacity=.05] (0.0, 0.08333333333333333) -- (0.0, 0.25) -- (0.95, 0.95) -- (0.8333333333333334, 0.6666666666666666) -- (0.25, 0.0) -- cycle;
  \end{tikzpicture}
  \endminipage\hfill
  \minipage[t]{0.32\linewidth}
  \strut\vspace*{-\baselineskip}\newline
  \begin{tikzpicture}[scale=3.7]
    \draw (0,0) -- (1,0); 
    \draw (0,0) -- (0,1); 
    \draw[ dashed] (1,0) -- (1,1); 
    \draw[ dashed] (0,1) -- (1,1); 

      
       \draw[thick, dashed] (0.0, 0.08333333333333333) -- (0.0, 0.25);
       \draw[thick] (0.0, 0.25) -- (0.95, 0.95);
       \draw[thick, dashed] (0.95, 0.95) -- (0.8333333333333334, 0.6666666666666666);
       \draw[thick] (0.8333333333333334, 0.6666666666666666) -- (0.25, 0.0);
       \draw[thick, dashed] (0.25, 0.0) -- (0.0, 0.08333333333333333);
       
       \fill[opacity=.05] (0.0, 0.08333333333333333) -- (0.0, 0.25) -- (0.95, 0.95) -- (0.8333333333333334, 0.6666666666666666) -- (0.25, 0.0) -- cycle;
       
       \draw[fill=RoyalBlue, opacity=.4] (0.0, 0.08333333333333333) -- (0.0, 0.25) -- (0.04, 0.25) -- (0.04, 0.08333333333333333) -- cycle;
       \draw[fill=RoyalBlue, opacity=.4] (0.0, 0.25) -- (0.95, 0.95) -- (0.9737279615219939, 0.9177977665058652) -- (0.023727961521994002, 0.2177977665058653) -- cycle;
       \draw[fill=RoyalBlue, opacity=.4] (0.95, 0.95) -- (0.8333333333333334, 0.6666666666666666) -- (0.7963462093943448, 0.6818966588768384) -- (0.9130128760610113, 0.9652299922101717) -- cycle;
       \draw[fill=RoyalBlue, opacity=.4] (0.8333333333333334, 0.6666666666666666) -- (0.25, 0.0) -- (0.21989693221172488, 0.026340184314740726) -- (0.8032302655450583, 0.6930068509814074) -- cycle;
       \draw[fill=RoyalBlue, opacity=.4] (0.25, 0.0) -- (0.0, 0.08333333333333333) -- (0.012649110640673516, 0.12128066525535389) -- (0.26264911064067353, 0.03794733192202055) -- cycle;
      \end{tikzpicture}
  \endminipage\hfill
  \minipage[t]{0.32\linewidth}%
  \strut\vspace*{-\baselineskip}\newline
  \begin{tikzpicture}[scale=3.7]
    \draw (0,0) -- (1,0); 
    \draw (0,0) -- (0,1); 
    \draw[ dashed] (1,0) -- (1,1); 
    \draw[ dashed] (0,1) -- (1,1); 

       \draw[thick, dashed] (0.0, 0.08333333333333333) -- (0.0, 0.25);
       \draw[thick] (0.0, 0.25) -- (0.95, 0.95);
       \draw[thick, dashed] (0.95, 0.95) -- (0.8333333333333334, 0.6666666666666666);
       \draw[thick] (0.8333333333333334, 0.6666666666666666) -- (0.25, 0.0);
       \draw[thick, dashed] (0.25, 0.0) -- (0.0, 0.08333333333333333);
       
       \fill[opacity=.05] (0.0, 0.08333333333333333) -- (0.0, 0.25) -- (0.95, 0.95) -- (0.8333333333333334, 0.6666666666666666) -- (0.25, 0.0) -- cycle;;
       
       \draw[fill=red, opacity=.4] (0.0, 0.08333333333333333) -- (0.0, 0.25) -- (-0.04, 0.25) -- (-0.04, 0.08333333333333333) -- cycle;
       \draw[fill=red, opacity=.4] (0.0, 0.25) -- (0.95, 0.95) -- (0.926272038478006, 0.9822022334941347) -- (-0.023727961521994002, 0.2822022334941347) -- cycle;
       \draw[fill=red, opacity=.4] (0.95, 0.95) -- (0.8333333333333334, 0.6666666666666666) -- (0.870320457272322, 0.6514366744564949) -- (0.9869871239389886, 0.9347700077898282) -- cycle;
       \draw[fill=red, opacity=.4] (0.8333333333333334, 0.6666666666666666) -- (0.25, 0.0) -- (0.2801030677882751, -0.026340184314740726) -- (0.8634364011216085, 0.6403264823519259) -- cycle;
       \draw[fill=red, opacity=.4] (0.25, 0.0) -- (0.0, 0.08333333333333333) -- (-0.012649110640673516, 0.04538600141131278) -- (0.2373508893593265, -0.03794733192202055) -- cycle;
       
       \draw[fill=red, opacity=.4] (0.0, 0.08333333333333333) -- (-0.04, 0.08333333333333333) arc (180.0:251.56505117707798:0.04) -- cycle;
       \draw[fill=red, opacity=.4] (0.0, 0.25) -- (-0.023727961521994002, 0.2822022334941347) arc (126.3843518158359:180.0:0.04) -- cycle;
       \draw[fill=red, opacity=.4] (0.95, 0.95) -- (0.9869871239389886, 0.9347700077898282) arc (-22.380135051959563:126.3843518158359:0.04) -- cycle;
       \draw[fill=red, opacity=.4] (0.8333333333333334, 0.6666666666666666) -- (0.8634364011216085, 0.6403264823519259) arc (-41.18592516570965:-22.380135051959563:0.04) -- cycle;
       \draw[fill=red, opacity=.4] (0.25, 0.0) -- (0.2373508893593265, -0.03794733192202055) arc (-108.43494882292201:-41.18592516570965:0.04) -- cycle;
      \end{tikzpicture}
  \endminipage
  \caption{Convex polygon $\cP$, $\cP$ with inner rectangles, $\cP$ with  outer rectangles and circular sectors.}
  \end{figure}
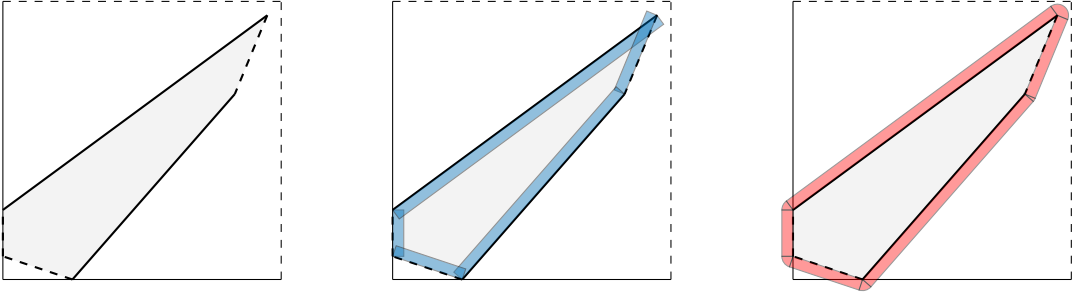

For the inside rectangles, the convexity of $\cP$ implies that these overlap and their union covers $\Omega_\varepsilon^-$.

For the outside we also add a circular sector of radius $\varepsilon$ to each corner of the polygon connecting the two sides of the attached outer rectangles. The restriction to $\sfU$ of the union of the outer rectangles and the angular sectors covers $\Omega_\varepsilon^+$. By convexity of $\cP$ the union of the circular sectors is precisely a full circle of radius $\varepsilon$. 

Let $\ell(\cP)$ be the length of the perimeter. The above considerations implies that 
\begin{align*}
&  \mu(\cP_{\varepsilon}^{+}) \leq \ell(\cP) \varepsilon+\pi\varepsilon^2\leq \(\ell(\cP)+\pi\)\varepsilon, \\
&  \mu(\cP_{\varepsilon}^{-})\leq \ell(\cP) \varepsilon. 
  \end{align*}
Since for convex sets $\Omega_1$, $\Omega_2$ with $\Omega_1\subseteq \Omega_2$ we have $\ell(\Omega_1)\leq \ell(\Omega_2)$ (see~\cite{Stephani}) we can bound $\ell(\cP)$ by the length of the perimeter of the largest convex set in $\sfU$, which is $\sfU$ itself. Since $\sfU$ has perimeter length $\ell(\sfU) = 4$ and we have assumed $\varepsilon\leq 1$ we find that $\mu(\Omega_{\varepsilon}^{\pm})\leq (4+\pi)\varepsilon$, which gives the result in the case of convex polygons. 
 \end{proof}

\section{Bounding the sums of Kloosterman sums}

\subsection{Preliminary reduction}
\label{sec:prelim} 

By partial summation, Theorem~\ref{thm:second} follows immediately (see below) from bounds on the following normalized version:
\[
\sK^{(2)} ( N;X)
=  \sum_{\pm}\sum_{N \leq n <  2N} \left|\sum_{X \leq c < 2X} \frac{1}{c} \cK(n, \pm 1, c)\right|^2.
\]
  
\begin{prop}\label{prop:Kpm} 
We have 
\[\sK^{(2)} ( N;X) \preccurlyeq   N + (NX)^{1/3} .
\] 
\end{prop}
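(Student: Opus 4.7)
\textbf{Proof plan for Proposition~\ref{prop:Kpm}.}

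The natural approach is to invoke Kuznetsov's trace formula combined with the Deshouillers--Iwaniec spectral large sieve for Fourier coefficients of Maass forms. A smoothing parameter $\Delta$ bridges the sharp cutoff $X\leq c<2X$ to the smooth setting required by Kuznetsov and will be optimized at the end.

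First, I would replace $c^{-1}\mathbf{1}_{[X,2X)}(c)$ by a smooth function $\phi_\Delta(c)$ supported on $[X,2X]$, coinciding with $c^{-1}$ on $[X+\Delta,2X-\Delta]$ and satisfying $\phi_\Delta^{(k)}(c)\ll_k c^{-1}\Delta^{-k}$, for a parameter $\Delta\in[1,X/2]$. The boundary intervals have total length $O(\Delta)$, and by the Weil bound~\eqref{eq: Kloost} the pointwise contribution on them is at most $\preccurlyeq \Delta X^{-1/2}$ per inner sum, so the boundary contributes at most $\preccurlyeq N\Delta^2/X$ to $\sK^{(2)}(N;X)$ after squaring and summing in $n$.

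For the smoothed sum I apply Kuznetsov's formula in its two sign variants. In the $+$ case, writing $\phi_\Delta(c)=\Phi_n(4\pi\sqrt{n}/c)$, one obtains
\[
\sum_c\phi_\Delta(c)\cK(n,1;c)=\sum_f\frac{\overline{\rho_f(n)}\rho_f(1)}{\cosh(\pi t_f)}\check{\Phi}_n(t_f)+(\text{Eisenstein}),
\]
with $\check{\Phi}_n(t)$ the relevant $J/K$-Bessel transform; the $-$ case yields an analogous identity with an additional holomorphic cusp form contribution. A standard analysis (stationary phase, plus integration by parts exploiting the smoothness of $\phi_\Delta$) shows that $\check{\Phi}_n(t)$ is essentially supported on $|t|\leq T:=\sqrt{N}/X+X/\Delta$ with controlled $L^\infty$-size. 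Opening the square in $\sum_n|\cdot|^2$, applying Cauchy--Schwarz, and invoking the Deshouillers--Iwaniec spectral large sieve
\[
\sum_{|t_f|\leq T}\frac{1}{\cosh(\pi t_f)}\Big|\sum_{N\leq n<2N}\alpha_n\rho_f(n)\Big|^2\preccurlyeq(T^2+N)\|\alpha\|_2^2,
\]
together with the standard upper bound $|\rho_f(1)|^2/\cosh(\pi t_f)\preccurlyeq 1$, yields a spectral contribution of size $\preccurlyeq N+X/\Delta$. The Eisenstein (and holomorphic, in the $-$ case) contributions are handled by continuous/holomorphic analogues of the large sieve and do not dominate.

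Finally, combining the two estimates,
\[
\sK^{(2)}(N;X)\preccurlyeq N+X/\Delta+N\Delta^2/X,
\]
and choosing $\Delta=(X^2/N)^{1/3}$ balances the last two terms at $(NX)^{1/3}$, producing the claimed bound. The principal obstacle is the precise Bessel transform analysis of $\check{\Phi}_n(t)$ across the three regimes $|t|\lesssim\sqrt{N}/X$, $|t|\sim X/\Delta$, and $|t|\gg X/\Delta$, ensuring that the resulting spectral cutoff $T$ enters the large sieve linearly (rather than quadratically) so that the optimization above yields the exponent $1/3$ rather than some larger exponent; controlling the Eisenstein contribution uniformly (in particular, handling the $|\zeta(1+2it)|^{-1}$ factor) requires some additional but standard work.
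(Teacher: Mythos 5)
Your overall strategy matches the paper's: smooth the sharp cut-off in $c$ at scale $\Delta$, bound the boundary by Weil, expand the smoothed sum by Kuznetsov, apply the spectral large sieve, and optimise over $\Delta$. The Weil boundary term $N\Delta^2/X$ and the final balance $\Delta=(X^2/N)^{1/3}$ giving $(NX)^{1/3}$ are both correct and coincide (after the change of scale $\Delta = X\Delta_{\mathrm{paper}}$) with the paper's computation.

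However, there is a genuine and decisive gap in the spectral step, and it is not, as you frame it, merely a matter of a careful ``Bessel transform analysis.'' The Kuznetsov transform of your smoothed test function does \emph{not} localise to a short window in the spectral parameter: the smoothing width $\Delta$ only controls how rapidly $\widecheck{\Phi}_n(t)$ decays for $|t|\gg X/\Delta$, while for all $|t|\lesssim X/\Delta$ one has $|\widecheck{\Phi}_n(it)|\asymp (1+|t|)^{-1/2}$. Feeding this into the Deshouillers--Iwaniec large sieve over the whole window $|t|\leq T$ (or into its dual form, as in Lemma~\ref{lem3a}) gives at best a spectral contribution of order $T^3+NT$ with $T=X/\Delta$, i.e.\ $(X/\Delta)^3+N(X/\Delta)$. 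Balanced against $N\Delta^2/X$ this lands around $(NX)^{3/5}$ or $NX^{1/3}$, not $(NX)^{1/3}$. No sharpening of the $L^\infty$- or $L^2$-bound on $\widecheck{\Phi}_n$ can fix this: the transform simply is spread over the whole range $|t|\lesssim X/\Delta$.

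The missing idea, which is the actual content of the paper's proof of Proposition~\ref{prop:Kpm}, is structural rather than analytic. One Mellin-inverts the $\Delta$-dependent weight $V_{n,\Delta}$ \emph{before} applying Cauchy--Schwarz, thereby writing the smoothed sum as an integral over a real frequency $r$ of transforms $U(r,\varpi)^{\pm}=\int_0^\infty V(x)x^{ir}\cJ^{\pm}_\varpi(x)\,dx/x$ in which the remaining weight $V$ is a \emph{fixed} bump independent of $\Delta$. For each fixed $r$, this transform (Lemma~\ref{analysis}) concentrates in a window of width $O(1)$ around $2|t|=|r|$; the $\Delta$-dependence is now entirely in the Mellin weight $(1+|r|\Delta)^{-A}/(1+|r|)$, which restricts $|r|\lesssim\Delta^{-1}$. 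One then applies, for each $r$, the \emph{hybrid short-interval} spectral large sieve (Jutila's form, Lemma~\ref{lem2a} and Corollary~\ref{coro}) with $T\asymp|r|$ and $H\asymp 1$, which yields $TH+N\asymp|r|+N$ rather than $T^2+N$. Integrating over $r$ against the Mellin weight then produces the linear law $\Delta^{-1}+N$. Without the Mellin-inversion reorganisation, the hybrid large sieve cannot be invoked at all, because the transform is not localised; and without the hybrid (short-interval) large sieve, the Mellin inversion by itself still leaves you with $T^2+N$. Both ingredients are needed, and neither appears in your plan.

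Secondary omission: you must also verify, as the paper does in~\eqref{w-mellin}, that the Mellin transform of $W_\Delta$ has essentially bounded $L^1$-norm on vertical lines, so that the Cauchy--Schwarz step in $r$ does not lose a factor; and you should note that for $\SL_2(\Z)$ there are no exceptional eigenvalues, so no separate treatment of small spectral parameters is required.
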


We prove Proposition~\ref{prop:Kpm} in Section~\ref{sec:proof} after establishing several 
auxiliary results.  However, before doing so we derive Theorem~\ref{thm:second} from it, from which in turn we then 
derive Theorems~\ref{thm:Triple Sum}.  

Partitioning the interval $[1,X]$ into dyadic intervals, we re-write the bound of  Proposition~\ref{prop:Kpm} 
as 
\begin{equation}\label{eq:ModifBound}
 \sum_{\pm}\sum_{N \leq n <  2N} \left|\sum_{ c \le X} \frac{1}{c} \cK(n, \pm 1, c)\right|^2
 \preccurlyeq   N + (NX)^{1/3} .
\end{equation}
Writing, 
\begin{align*}
\sum_{ c \le X}  \cK(n, \pm 1, c)& = \sum_{ c \le X} c \cdot \frac{1}{c} \cK(n, \pm 1, c) \\
& = X \sum_{ c \le X} \frac{1}{c} \cK(n, \pm 1, c) - \int_0^X \sum_{ c \le y} \frac{1}{c} \cK(n, \pm 1, c) dy
\end{align*}
and using the Cauchy--Schwarz inequality, we conclude
\[
 \left|\sum_{ c \le X} \cK(n, \pm 1, c)\right|^2 \ll    X^2 \left|\sum_{ c \le X} \frac{1}{c} \cK(n, \pm 1, c)\right|^2+ X \int_0^X \left|\sum_{ c \le y} \frac{1}{c} \cK(n, \pm 1, c)\right|^2 dy. 
\]
Summing over $n$ with $N \leq n <  2N$ and changing the order of summation and integration, we
see that the bound~\eqref{eq:ModifBound} implies  Theorem~\ref{thm:second}. 

Next, to derive Theorem~\ref{thm:Triple Sum} from Theorem~\ref{thm:second}, we note that 
by the Cauchy--Schwarz inequality, we have 
\[
\sum_{\pm}\sum_{N \leq n <  2N} \left|\sum_{X \leq c < 2X}   \cK(n, \pm 1, c)\right| \preccurlyeq NX + N^{2/3} X^{7/6}.
\]
Now Selberg's celebrated identity (see~\cite{Sel, HaKa, Ku, Xi, And}) implies
\begin{align*}
\sum_{\pm}\sum_{M \leq m <  2M}&\sum_{N \leq n <  2N} \left|\sum_{X \leq c < 2X}   \cK(n,\pm m, c)\right| \\
&= \sum_{\pm}\sum_{M \leq m <  2M}\sum_{N \leq n <  2N} \left|\sum_{X \leq c < 2X} \sum_{d \mid \gcd(m, n, c)} 
d\cK\(\frac{mn}{d^2},\pm 1, \frac{c}{d}\)\right|\\
& \leq  \sum_{\pm}\sum_{d \leq 2\min\{M, N, X\}} \sum_{M/d \leq m <  2M/d}\sum_{N/d \leq n <  2N/d} \left|\sum_{X/d \leq c < 2X/d}    \cK(mn,\pm 1, c)\right|.
\end{align*}
Using the classical bound on the divisor function,  see~\cite[Equation~(1.81)]{IwKow}, we now write  
\begin{align*}
\sum_{\pm}\sum_{M \leq m <  2M}&\sum_{N \leq n <  2N} \left|\sum_{X \leq c < 2X}   \cK(n,\pm m, c)\right| \\
& \leq  \sum_{\pm}\sum_{d \leq 2\min\{M, N, X\}} \sum_{MN/d^2 \leq r  <  2MN/d^2} \left|\sum_{X/d \leq c < 2X/d}    \cK(r,\pm 1, c)\right|\\
& \preccurlyeq \sum_{d \leq 2\min\{M, N, X\}}  \frac{MN}{d^2} X + \frac{(MN)^{2/3} X^{7/6}}{d^{3/2}} \ll MNX + (MN)^{2/3} X^{7/6}, 
\end{align*}
which is the bound of Theorem~\ref{thm:Triple Sum}. 

 
\subsection{Analysis}
Throughout this section, we always write $s \in \C$ as $s = \sigma + i t$ with $\sigma, t \in \R$. 
Furthermore, all implied constants in the symbol `$\ll$'  may depend on $\sigma$ 
(but not on $t$), on some constants $A$, $B$ and $\Aprime$, on the integer $j$ and on $\varepsilon > 0$, wherever applicable.

For  $0 < \Delta \leq 1/4$, let $W_{\Delta}$ be a smooth function on $\R_{>0} = (0, \infty)$with $W_{\Delta} = 1$ on $[1, 2]$, $\text{supp}(W_{\Delta}) \subseteq [1-\Delta, 2 + \Delta]$ and such that $\| W^{(j)} \|_{\infty} \ll \Delta^{-j}$ for all $j \in \N \cup \{0\}$ (we recall that the implied
constant may depend on $j$).  One shows, using integration by parts and elementary estimates, that  the Mellin transform 
\begin{equation}\label{eq:mellin-definition}
\widehat{W}_{\Delta}(s)=\int_0^\infty W_{\Delta}(x)x^{s}\frac{dx}{x}
\end{equation}
 of $W_{\Delta}$ is entire and satisfies
\begin{equation}\label{w-mellin}
\widehat{W}_{\Delta}(s) \ll \frac{1}{1+|s|} \(1 + |s| \Delta\)^{-A}
\end{equation}
for every $A > 0$. The key point is that $\widehat{W}_{\Delta}$ has almost uniformly bounded $L^1$-norm on fixed vertical  strips. 

Let 
\begin{equation}\label{eq:Sets Spm}
\cS^+ = i\R \cup \(\N - \tfrac{1}{2}\) \mand \cS^- =  i\R.
\end{equation}

For $\varpi \in \cS^{\pm}$ we define
\[\cJ^{\pm}_{\varpi}(x) =  \frac{J^{\pm}_{2\varpi}(x) - J^{\pm}_{-2\varpi}(x)}{\sin(\pi \cdot \varpi)},\]
where $J^+ = J$ is the Bessel $J$-function, and $J^- = I$ is the Bessel $I$-function. 
Note that for $\varpi = k - 1/2  \in \N - \tfrac{1}{2}$, we have 
\[
\cJ^+_{\varpi}(x) = 2(-1)^{k+1} J_{2k-1}(x),
\]
 and for $\varpi = it \in i \R$ we have 
\[
\cJ^-_{\varpi}(x) = - \frac{4}{\pi} K_{2it}(x) \cosh(\pi t).
\]

We now record the Mellin transforms $\widehat{\cJ}_{\varpi}^\pm$ of $\cJ^\pm_{\varpi}$  
(see, for instance,~\cite[Equations~6.561.14 and~6.561.16]{GR} and also~\cite[Equation~(4.7)]{BB})
\[
\widehat{\cJ}^+_{\varpi}(s) 
 = \begin{cases} \displaystyle{- \frac{2^s}{\pi} \Gamma\(\frac{s}{2} + it\)\Gamma\(\frac{s}{2} - it\)\cos\(\pi s/2\)}, & \varpi = it,\\
 \displaystyle{ (-1)^{k+1}2^{s}\frac{\Gamma\(k - \frac{1}{2} + \frac{s}{2}\)}{\Gamma\(k + \frac{1}{2} - \frac{s}{2}\)}}, 
& \varpi = k - \frac{1}{2} ,\end{cases}
\]
and
\[\widehat{\cJ}^-_{\varpi}(s) 
 = - \frac{2^s}{\pi} \Gamma\(\frac{s}{2} + it\)\Gamma\(\frac{s}{2} - it\)\cosh(\pi t), \qquad \varpi = it.
\]
For a smooth function with compact  support on $\R_{>0}$ and $\varpi \in \cS$ we define
\begin{equation}\label{w-hat pm}
 \widecheck{V}^{\pm}(\varpi) = \int_0^{\infty} V(x) \cJ^{\pm}_{\varpi}(x) \frac{dx}{x}. 
\end{equation}

\begin{lem}\label{analysis} Let  $r \in \R$, $Z > 0$, $\varpi \in \cS^{\pm}$ and $V$ be a fixed smooth function with compact support in $\R_{>0}$. Let
\[\cI^{\pm}(r, \varpi, Z) = \int_0^{\infty} V\(\frac{x}{Z}\)x^{ir} \cJ^{\pm}_{\varpi}(x) \frac{dx}{x}.\]
Then, for any constant $A > 0$, 
\begin{itemize}
\item if $\varpi = it \in i\R$, then
 \begin{align*} 
\cI^{\pm}&\(r, it, Z\) \\
& \ll   \frac{1}{\(\(1 + \left||r| - 2|t|\right|\)\(1 + |r|+2|t|\)\)^{1/2}}\(1 + \frac{1+\left||r| - 2|t|\right|}{1 + \frac{Z^2}{1 + |r| + 2|t|}}\)^{-A};
\end{align*} 
\item if $\varpi = k - \tfrac{1}{2} \in \N - \tfrac{1}{2}$,  then
\[
\cI^{+}\(r, k-\tfrac{1}{2}, Z\)  \ll  \frac{1}{k}\(1 + \frac{k}{1+Z}\)^{-A}.
\]
\end{itemize}
\end{lem}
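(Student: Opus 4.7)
The plan is to apply Mellin inversion. Since $V$ is smooth and compactly supported in $\R_{>0}$, its Mellin transform $\widehat V$ (defined as in~\eqref{eq:mellin-definition}) is entire and decays faster than any polynomial on vertical lines. Applying Mellin inversion to $\cJ^\pm_\varpi$ and then Fubini,
\[
\cI^\pm(r,\varpi,Z) \;=\; \frac{1}{2\pi i}\int_{(\sigma)}\widehat{\cJ}^\pm_\varpi(s)\,\widehat V(ir-s)\,Z^{ir-s}\,ds
\]
for any $\sigma$ in the strip of absolute convergence (and on any other line it is moved to, provided no poles are crossed). Writing $s=\sigma+i\tau$, we have $|Z^{ir-s}|=Z^{-\sigma}$, while $\widehat V(ir-s) = \widehat V(-\sigma + i(r-\tau))$ has Schwartz decay in $|r-\tau|$. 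Hence the integrand is effectively localized to $\tau\approx r$, and the bound reduces to applying Stirling's asymptotic to $\widehat{\cJ}^\pm_\varpi(s)$ and choosing $\sigma$ optimally.

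In the holomorphic case $\varpi=k-\tfrac12$, Stirling yields
\[
\left|\frac{\Gamma(k-\tfrac12+s/2)}{\Gamma(k+\tfrac12-s/2)}\right|\;\ll\;(k+|\tau|)^{\sigma-1}
\]
uniformly for bounded $\sigma$, with rapid decay once $|\tau|\gg k$. Setting $\tau\approx r$ gives $|\cI^+|\ll Z^{-\sigma}(2k)^{\sigma-1}$. Taking $\sigma=0$ yields the bound $k^{-1}$, whereas shifting to $\sigma=-A$ yields $k^{-1}(Z/(2k))^A$; combining the two yields the claimed $k^{-1}(1+k/(1+Z))^{-A}$. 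In the spectral case $\varpi=it\in i\R$, Stirling applied to $\Gamma(s/2+it)\Gamma(s/2-it)$ together with the trigonometric factor $\cos(\pi s/2)$ (respectively $\cosh(\pi t)$) gives
\[
|\widehat{\cJ}^\pm_{it}(\sigma+i\tau)|\;\ll\;2^{\sigma}\bigl((1+|\tau/2+t|)(1+|\tau/2-t|)\bigr)^{(\sigma-1)/2}
\]
in the oscillatory regime for each kernel, with strictly stronger (exponential) decay outside it. Using $(1+|r/2+t|)(1+|r/2-t|)\asymp (1+|r|+2|t|)(1+||r|-2|t||)$ and the localization $\tau\approx r$, we arrive at
\[
|\cI^\pm(r,it,Z)|\;\ll\;Z^{-\sigma}\bigl((1+|r|+2|t|)(1+||r|-2|t||)\bigr)^{(\sigma-1)/2},
\]
so that $\sigma=0$ produces the baseline factor $\bigl((1+|r|+2|t|)(1+||r|-2|t||)\bigr)^{-1/2}$ in the claimed estimate.

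To obtain the refinement by $\bigl(1+(1+||r|-2|t||)/(1+Z^2/(1+|r|+2|t|))\bigr)^{-A}$, I would combine two mechanisms. When $Z^2\gtrsim 1+|r|+2|t|$, shifting the contour to $\sigma=-A$ gains $Z^A\bigl((1+|r|+2|t|)(1+||r|-2|t||)\bigr)^{-A/2}$, which is exactly the required factor in that regime. In the complementary regime $Z^2\ll 1+|r|+2|t|$, where the claim demands the sharper decay $(1+||r|-2|t||)^{-A}$, the Mellin shift alone is insufficient and one should instead integrate by parts directly in the original oscillatory integral, exploiting the JWKB-type asymptotic of $J_{2it}$ or $K_{2it}$ whose effective phase has derivative of order $||r|-2|t||/x$ against $x^{ir}$; each integration by parts thus contributes a factor $\sim 1/(1+||r|-2|t||)$, and iterating $A$ times produces the desired decay.

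The main obstacle will be the uniform transitional analysis in the spectral case near $|r|\approx 2|t|$, where the Gamma-function asymptotics change character and the Bessel function passes from oscillatory to exponentially small through an Airy-type transition, so one must patch together the Mellin-shift and integration-by-parts arguments seamlessly. A secondary but routine point is verifying that the contour shifts cross no poles that affect the bound: the poles of $\widehat{\cJ}^\pm_{it}$ lie at $s=-2n\pm 2it$ for integers $n\ge 0$, which for $t$ bounded away from zero are well off the real axis, and any residue contributions encountered for small $t$ are smaller than the main estimate by the Schwartz decay of $\widehat V$.
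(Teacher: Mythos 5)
Your Mellin-inversion setup and the holomorphic case ($\varpi=k-\tfrac12$) match the paper's argument. The gap is in the spectral case ($\varpi=it$), where you assert that in the regime $Z^2\ll 1+|r|+2|t|$ ``the Mellin shift alone is insufficient'' and propose falling back on stationary-phase/integration-by-parts arguments with the JWKB asymptotics of the Bessel kernel. That assertion is incorrect, and it leads you to mishandle the residues. When the vertical contour is shifted from $\Re s=\sigma_0>0$ to $\Re s=-C<0$, you cross the poles of $\Gamma(s/2\pm it)$ at $s=-2n\mp 2it$ whenever $-2n>-C$; this happens for every $t$ (the poles sit at real parts $-2n$, so ``well off the real axis'' has no bearing on whether a \emph{vertically} moving contour crosses them). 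More importantly, these residues are not a small correction to be dismissed: the residue at $s=\mp 2it$ picks up $\widehat V(ir\pm 2it)=\widehat V(i(r\pm 2t))$, whose Schwartz decay yields arbitrary negative powers of $1+|r\pm 2t|$, and $\min(|r-2t|,|r+2t|)=\bigl||r|-2|t|\bigr|$. That is precisely where the factor $(1+||r|-2|t||)^{-A}$ comes from in the regime $Z^2\ll 1+|r|+2|t|$. In the paper the estimate is assembled by bounding the shifted line integral (which carries the $(Z^2/\text{stuff})$ power) together with the residues; the residues actually \emph{dominate} when $Z$ is small and furnish the claimed bound, with no need for a separate integration-by-parts analysis.

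Your proposed alternative, iterated integration by parts against the Bessel phase, would in principle produce decay in $||r|-2|t||$, but as you yourself flag, it requires a uniform transitional (Airy) analysis around $|r|\approx 2|t|$ and separate treatment of the $x\lessgtr 2|t|$ regimes of the $J_{2it}$, $K_{2it}$ asymptotics (and, for $\cJ^-$, cancellation against the $\cosh(\pi t)$ factor). None of that work is carried out, so as written the argument has a hole exactly where the estimate is delicate. The fix is to keep the contour shift and compute the residues honestly: they produce $\sum_{n\ge 0} Z^{2n}(1+|t|)^{-n-1/2}(1+|r\pm 2t|)^{-B}$, and combining this with the line integral on $\Re s=-C$ gives the stated bound after standard manipulations.
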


\begin{proof} By the Mellin inversion formula we have
\[\cI^{\pm}(r, \varpi, Z) = \int_{\sigma + i \R} \widehat{\cJ}^{\pm}_{\varpi}(s) \widehat{V}(ir - s) Z^{ir - s} \frac{ds}{2\pi i}\]
for $\sigma > 0$. Note that $\widehat{V}$ is entire and rapidly decaying on vertical lines. 

In the case $\varpi = k - \tfrac{1}{2}$ we shift the contour to $\sigma  = 0$ to obtain the bound 
\begin{align*} \cI^+(r, k - \tfrac{1}{2}, Z) & 
\ll \int_{\R}\left|\frac{\Gamma(k - \frac{1}{2} + \frac{i\tau}{2})}{\Gamma(k + \frac{1}{2} - \frac{i\tau }{2})}\right|
 \(1 + |\tau - r|\)^{-10} d\tau \\
& \ll \frac{1}{k} \int_{\R}  (1 + |\tau - r|)^{-10} d\tau \ll \frac{1}{k}.
\end{align*} 
 Let $A > 0$. If $k \leq A/2 + 1$, there is nothing else to show. Otherwise, we shift the contour to $\sigma = -A$ without crossing poles, using a bound similar to~\eqref{w-mellin}, we obtain   
 \begin{align*} 
 \cI^+\(r, k - \tfrac{1}{2}, Z\) & \ll Z^A  \int_{\R} 
 \left|\frac{\Gamma(k - \frac{1+A}{2} + \frac{i\tau}{2})}
 {\Gamma(k + \frac{1+A}{2} - \frac{i\tau }{2})}\right| (1 + |\tau - r|)^{-10} d\tau\\
 & \ll  Z^A k^{-1-A}.
\end{align*} 
 This completes the proof in the case $\varpi = k - \tfrac{1}{2}$. 

In the case $\varpi = it$ we shift the contour  to $\sigma = -\Aprime \not\in \{0, -2, -4, \ldots\}$ with $\Aprime>  -1/2$. 
Then, for any $B > 0$, using a bound similar to~\eqref{w-mellin} and trivial bounds on $\cos\(\pi s/2\)$ and 
$\cosh(\pi t)$, we see that   the corresponding integral $\fI$ along the vertical line $\RE s = -\Aprime$ is bounded by  
\begin{align*} 
\fI  &\ll Z^{\Aprime} \int_{\R} \Gamma\(-\frac{\Aprime}{2} + i \(\frac{\tau}{2} + t\)\)  \Gamma\(-\frac{\Aprime}{2} + i \(\frac{\tau}{2} - t\)\) \\
& \qquad  \qquad  \qquad  \qquad  \qquad \qquad \times  \( e^{\frac{1}{2}\pi |\tau|} +e^{\pi |t|} \)  \(1 + |\tau - r|\)^{-B} d\tau \\
& \quad \ll  Z^{\Aprime} \int_{\R} \((1 + |\tau - 2t|)(1 + |\tau + 2t|)\)^{-(\Aprime+1)/2 } (1 + |\tau - r|)^{-B} d\tau,
\end{align*} where we have used the Stirling asymptotics on the Gamma functions.
Since
 $(1 + |\tau \pm 2t|) (1 + |\tau - r|) \geq (1 + |r \pm 2t|)$, we can further estimate this as 
 \begin{equation}\label{bound1}
\begin{split}
\fI &  \ll Z^{\Aprime}  ((1 + |r - 2t|)(1 + |r + 2t|)^{-(\Aprime+1)/2 } 
\int_{\R} (1 + |\tau - r|)^{-(B- \Aprime-1)} d\tau\\
 & \ll \(\frac{Z^2}{(1 + |r - 2t|)(1 + |r + 2t|)}\)^{\Aprime/2} 
\frac{1}{((1 + |r - 2t|)(1 + |r + 2t|))^{1/2 }}, 
 \end{split}
\end{equation}
provided that $B \geq \Aprime+3$, say.

If $\Aprime > 0$, then  for any $B > 0$ we also pick up residues coming from the poles of the Gamma factors. These are bounded by
\begin{align*} 
\fR & \ll \sum_{\pm}\sum_{n = 0}^{[\Aprime/2]} \frac{Z^{2n}}{(1 + |t|)^{n + 1/2}}(1 + |r \pm 2t|)^{-B}\\
& \ll  \sum_{\pm} \frac{(1 + |r \pm 2t|)^{-B}}{(1 + |t|)^{1/2}}\(1 +   \frac{Z^{\Aprime}}{(1 + |t|)^{\Aprime/2}}\).
\end{align*} 
It is easily seen that $(1 + 4|t|)(1 + |r\pm 2t|) \geq (1 + |r \mp 2t|)$, so that we have 
\begin{align*} 
\fR  & \ll  \sum_{\pm} \frac{(1 + |r \pm 2t|)^{-(B - \frac{\Aprime+1}{2})}}{(1 + |r \mp 2t|)^{1/2}}\(1 +   \frac{Z^{\Aprime}}{(1 + |r \mp 2t|)^{\Aprime/2}}\)\\
&= \sum_{\pm} \frac{(1 + |r \pm 2t|)^{-(B-1 - \frac{\Aprime}{2})}}{((1 + |r - 2t|)(1 + |r + 2t|))^{1/2}}\(1 +   \frac{Z^{\Aprime}}{(1 + |r \mp 2t|)^{\Aprime/2}}\).
\end{align*} 
Assuming that $B - 1 - \Aprime/2 \geq  \Aprime/2$ and noting that $\{|r + 2t|, |r - 2t|\} = \{\left|| r | - 2 |t|\right|, |r| + 2 |t|\}$, 
this is yields 
\begin{align*} 
\fR & \ll   \frac{1}{((1 + |r - 2t|)(1 + |r + 2t|))^{1/2}}\\
& \qquad\times \((1 +\left|| r | - 2 |t|\right| )^{-D}+  \( \frac{Z^{2}}{(1 + ||r| -  2|t||)(1 + |r| + 2 |t|)} \)^{D}\) 
\end{align*} 
with $D = \Aprime/2$, 
which then simplifies as 
\begin{equation}\label{bound2}
\fR \ll    \frac{1}{((1 + |r - 2t|)(1 + |r + 2t|))^{1/2}}\(\frac{1 +\left|| r | - 2 |t|\right| }{1 + \frac{Z^2}{1 + |r| + 2 |t|}}\)^{-D}. 
\end{equation}   
Thus we write 
\[
\cI^{\pm}\(r, it, Z\)  \le |\fI| + |\fR|
\]
and  apply~\eqref{bound1} and~\eqref{bound2} with $\Aprime = 2A$, and thus $D = A$ 
(observe that~\eqref{bound2}   dominates~\eqref{bound1}), and we apply~\eqref{bound1} with $\Aprime= -1/4$, say (in which case~\eqref{bound2} is void) to obtain a slightly stronger result than claimed. This completes the proof  in the case $\varpi = it$. 
\end{proof}

\subsection{The Kuznetsov formula and the spectral large sieve}
We now state the Kuzne\-tsov formula in an abstract form with the spectral large sieve incorporated. 
Recall the definitions~\eqref{eq:Sets Spm} and~\eqref{w-hat pm}.

\begin{lem}\label{lem2a} Let $m, n \in \N$ and $V$ a smooth  function with compact support on $\R_{>0}$. Then there exists a measure $d\mu(\varpi)$ on $\cS^{\pm}$ and real numbers $\rho_{\varpi}(n), \rho_{\varpi}(m)$ such that 
\[\sum_{c=1}^\infty \frac{1}{c} \cK(m, \pm n, c) V\( 4\pi\frac{\sqrt{mn}}{c}\) = \int_{\cS^{\pm}} \rho_{\varpi}(m) \rho_{\varpi}(n)  \widecheck{V}^{\pm}(\varpi) d\mu(\varpi), \]
where  
for  any $T, N \geq 1$ and $1 \leq H \leq T$ and any sequence $\balpha = \(\alpha(n) \)_{n=1}^N \subseteq \C$   we have 
\[\int_{T-H \leq |\varpi| \leq T} \left| \sum_{n \leq N} \rho_{\varpi}(n) \alpha(n)\right|^2 d\mu(\varpi) \ll \(TH + N\)^{1+\varepsilon} \| \balpha \|_2^2, \]
where $ \| \balpha \|_2$ is the $\ell^2$-norm of $\balpha$. 
\end{lem}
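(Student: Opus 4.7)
The plan is to repackage the classical Kuznetsov formula for the full modular group $\SL_2(\Z)$ into the abstract measure-theoretic form stated, and then invoke the Deshouillers--Iwaniec spectral large sieve uniformly across the three spectral families.

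First I would construct the measure $d\mu$ and the coefficients $\rho_\varpi$ by assembling the three spectral contributions. On $\cS^\pm$ restricted to $i\R$, the measure consists of atoms at $\varpi = it_j$ (where $\{1/4 + t_j^2\}$ are the Laplace eigenvalues of the Hecke--Maass cusp forms for $\SL_2(\Z)$) weighted by $1/\cosh(\pi t_j)$, plus an absolutely continuous piece $dt/(2\pi)$ from the Eisenstein continuous spectrum. Here $\rho_{it_j}(n)$ is the $n$-th Fourier coefficient of the $j$-th Hecke--Maass cusp form (suitably normalized so that the $\cosh$ factor appears in the measure and not in the coefficient), and for the Eisenstein contribution one takes $\rho_{it}(n) = \zeta(1 + 2it)^{-1} \eta_{it}(n)$ with $\eta_{it}(n) = \sum_{ab = n} (a/b)^{it}$. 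For $\cS^+$ restricted to $\N - \tfrac{1}{2}$, the measure is $\sum_{k \ge 1} \sum_f \delta_{k-1/2}$ where the inner sum runs over an orthonormal Hecke basis of holomorphic cusp forms of weight $2k$, with $\rho_{k-1/2}(n)$ the normalized $n$-th Fourier coefficient.

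With this set-up, I would quote the Kuznetsov formula in the form given, for instance, in Iwaniec--Kowalski or Motohashi. In the plus case ($mn > 0$), it writes $\sum_c c^{-1} \cK(m,n;c) V(4\pi\sqrt{mn}/c)$ as a sum of three pieces corresponding to Maass forms, Eisenstein series, and holomorphic forms, whose integral transforms of $V$ are $\int_0^\infty V(x) \cJ^+_\varpi(x) \, dx/x$ with $\varpi = it_j$, $\varpi = it$, and $\varpi = k - \tfrac12$ respectively; this is exactly $\widecheck V^+(\varpi)$ by the definition~\eqref{w-hat pm}. In the minus case ($mn < 0$), the holomorphic spectrum drops out and the Bessel $K$-transform appears, giving $\widecheck V^-(\varpi)$ with $\varpi = it$. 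Matching the Bessel kernels to $\cJ^\pm_\varpi$ (i.e.\ that up to explicit normalizations $\cJ^+_{it} \sim K_{2it}$, $\cJ^+_{k-1/2} \sim J_{2k-1}$, $\cJ^-_{it} \sim K_{2it}\cosh(\pi t)$) is a matter of bookkeeping and is built into the definitions of $\cJ^\pm$ and the Mellin formulas recorded above.

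For the large-sieve assertion, I would invoke the Deshouillers--Iwaniec inequalities, which provide, for each of the three spectral families separately, the bound
\[
\sum_{T - H \le |\varpi| \le T} \frac{1}{\cosh(\pi \operatorname{Im}\varpi)} \left| \sum_{n \le N} \widetilde\rho_\varpi(n) \alpha(n) \right|^2 \ll (TH + N)^{1+\varepsilon} \|\balpha\|_2^2
\]
with the appropriate normalization on the cuspidal side, a continuous analogue for the Eisenstein piece, and the holomorphic analogue for $T-H \le k - \tfrac12 \le T$. Since my measure $d\mu$ is constructed precisely so as to absorb the $1/\cosh(\pi t)$ (or $\Gamma$-function) normalization into the weight, the three bounds combine into the uniform estimate of the lemma. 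The main technical subtlety, and the only genuinely non-trivial point, is to make sure the normalizations of $\rho_\varpi(n)$ relative to $d\mu(\varpi)$ are chosen consistently across the three families so that both the arithmetic (Kuznetsov) identity and the analytic (large-sieve) inequality take the stated clean form simultaneously; once this is fixed, everything else is a direct citation.
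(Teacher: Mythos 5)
Your proposal takes essentially the same route as the paper: both repackage the Kuznetsov formula for $\SL_2(\Z)$ into the abstract measure form and then invoke the spectral large sieve. Your explicit construction of the measure (atoms at Maass eigenvalues and holomorphic weights, plus the Eisenstein continuous piece) and the matching of $\cJ^\pm_\varpi$ to the Bessel kernels is correct and in fact fleshes out what the paper leaves implicit via citation of~\cite[Theorems~16.5 and~16.6]{IwKow}.

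There is, however, one substantive error in the analytic half. You invoke ``the Deshouillers--Iwaniec inequalities'' as already giving
\[
\int_{T-H\le |\varpi|\le T}\Bigl|\sum_{n\le N}\rho_\varpi(n)\alpha(n)\Bigr|^2\,d\mu(\varpi)\ll (TH+N)^{1+\varepsilon}\|\balpha\|_2^2
\]
for general $1\le H\le T$. That is not what Deshouillers--Iwaniec prove: their spectral large sieve covers only the full-window case $H=T$, yielding the weaker $(T^2+N)^{1+\varepsilon}$. The hybrid form with genuine savings in a window of length $H<T$ is a later refinement, due to Jutila~\cite{Ju} for the non-holomorphic spectrum (with the Eisenstein part following from the same argument) and, for both the holomorphic and non-holomorphic families uniformly, Lam~\cite{La}. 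This is not a bookkeeping point: in the subsequent proof of Proposition~\ref{prop:Kpm}, the spectral sum is sliced into windows $H_\nu\le 1+\bigl||r|-2|t|\bigr|<2H_\nu$ around $T_\nu=H_\nu+|r|$, and the saving $T_\nu H_\nu$ versus $T_\nu^2$ is precisely what makes the final estimate close. If one were to follow your citation to DI, the needed inequality would not be found there, and the downstream argument would lose a factor. So the gap is the omission of the Jutila/Lam-type hybrid large sieve (or an equivalent independent proof of it).
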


The Kuznetsov formula is stated in~\cite[Theorem~16.5 and~16.6]{IwKow}. The measure $d\mu(\varpi)$ has a discrete part (holomorphic and non-holo\-morphic cusp forms) and a continuous part (Eisenstein series).  Note that for the group $\SL_2(\Z)$ there are no exceptional eigenvalues. 

The large sieve inequality for $H = T$ is a famous result of Deshouillers and Iwaniec~\cite{DI}; the hybrid form for general $H$ can be found in~\cite[Theorem~1.1]{Ju} in the non-holomorphic case. The proof (cf.~\cite[Equation~(3.1)]{Ju}) shows that it holds automatically for the continuous spectrum. The holomorphic case is similar, and can be found, for instance, in~\cite{La}.  In fact, the work of Lam~\cite{La}  includes the hybrid form for general $H$ in both  non-holomorphic and holomorphic cases.

We quickly derive the following ``dual'' version of the spectral large sieve. 

\begin{lem}\label{lem3a}
In the notation of Lemma~\ref{lem2a}, for any 
  measurable function $f : \cS \rightarrow \C$ we have
\[
\sum_{n\le N}   \left| \int_{T-H \leq |\varpi| \leq T} \rho_{\varpi}(n) f(\varpi)d\mu(\varpi) \right|^2 
 \ll  \(TH + N\)^{1+\varepsilon}  \int_{T-H \leq |\varpi| \leq T} |f(\varpi)|^2 d\mu(\varpi).  
\]
\end{lem}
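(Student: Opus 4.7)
The plan is to deduce Lemma~\ref{lem3a} from Lemma~\ref{lem2a} via the standard $\ell^{2}$--$L^{2}$ duality principle. In operator-theoretic terms, Lemma~\ref{lem2a} is exactly the assertion that the linear map
\[
T\colon\balpha\longmapsto \Big(\varpi\mapsto \sum_{n\le N}\alpha(n)\rho_{\varpi}(n)\Big)
\]
from $\C^{N}$ to $L^{2}\bigl(\mathbf{1}_{T-H\le|\varpi|\le T}\,d\mu\bigr)$ has operator norm $\ll(TH+N)^{(1+\varepsilon)/2}$, and Lemma~\ref{lem3a} is precisely the corresponding bound for the adjoint $T^{*}\colon f\mapsto\bigl(\int\rho_{\varpi}(n)f(\varpi)\,d\mu\bigr)_{n\le N}$. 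The plan is therefore to execute this duality directly, without invoking any abstract functional-analytic framework.

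Concretely, I would set
\[
\beta(n):=\int_{T-H\le|\varpi|\le T}\rho_{\varpi}(n)\,f(\varpi)\,d\mu(\varpi),
\]
whose squared $\ell^{2}$-norm is exactly the quantity to be bounded, and introduce the bilinear pairing
\[
B(\balpha,f):=\int_{T-H\le|\varpi|\le T}\Big(\sum_{n\le N}\alpha(n)\rho_{\varpi}(n)\Big)f(\varpi)\,d\mu(\varpi).
\]
On the one hand, Fubini (applicable since the $n$-sum is finite and $\rho_{\varpi}(n)\in L^{2}(d\mu)$ on the truncation, as follows from Lemma~\ref{lem2a} applied to each unit vector $\balpha=\mathbf{e}_{n}$) gives $B(\balpha,f)=\sum_{n\le N}\alpha(n)\beta(n)$. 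On the other hand, Cauchy--Schwarz in $\varpi$ yields
\[
|B(\balpha,f)|\le\Big(\int_{T-H\le|\varpi|\le T}\Big|\sum_{n\le N}\alpha(n)\rho_{\varpi}(n)\Big|^{2}d\mu\Big)^{1/2}\Big(\int_{T-H\le|\varpi|\le T}|f|^{2}\,d\mu\Big)^{1/2},
\]
and invoking Lemma~\ref{lem2a} on the first factor produces
\[
|B(\balpha,f)|\ll(TH+N)^{(1+\varepsilon)/2}\,\|\balpha\|_{2}\,\Big(\int_{T-H\le|\varpi|\le T}|f|^{2}\,d\mu\Big)^{1/2}.
\]

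To conclude, I would specialise $\alpha(n)=\overline{\beta(n)}$, so that the Fubini identity becomes $B(\balpha,f)=\sum_{n\le N}|\beta(n)|^{2}=\|\bbeta\|_{2}^{2}$, while $\|\balpha\|_{2}=\|\bbeta\|_{2}$. Comparing the two evaluations of $B(\balpha,f)$, dividing by $\|\bbeta\|_{2}$ (which may be assumed nonzero, else the claim is trivial), and squaring produces exactly the bound asserted in Lemma~\ref{lem3a}. The argument is purely formal abstract duality, so no genuine obstacle is expected; the only book-keeping step is the Fubini justification above, which is entirely routine.
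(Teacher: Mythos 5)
Your argument is correct and is essentially the paper's proof in duality dressing: you introduce the pairing $B(\balpha,f)$, evaluate it via Fubini as $\sum_n\alpha(n)\beta(n)$, bound it via Cauchy--Schwarz and Lemma~\ref{lem2a}, and then specialise $\alpha(n)=\overline{\beta(n)}$ (exactly the paper's $\alpha_n=\int\overline{f(\sigma)}\rho_\sigma(n)\,d\mu(\sigma)$, using that the $\rho_\varpi(n)$ are real) to close the loop. This is the same self-dual bilinear-form trick; no divergence from the paper's route.
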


\begin{proof} Let
\[\cL= 
\sum_{n\le N} \left| \int_{T-H\leq |\varpi|\leq T} \rho_{\varpi}(n) f(\varpi)d\mu(\varpi)\right|^2  =  \int_{T-H \leq |\varpi| \leq T} f(\varpi) \sum_{n \leq N} \rho_{\varpi}(n)  \alpha_n\, d\mu(\varpi),
\]
where
\[\alpha_n =  \int_{t-H \leq |\sigma| \leq T} \overline{f(\sigma)} \rho_{\sigma}(n)\, d\mu(\sigma). \]
By the Cauchy--Schwarz inequality and the large sieve inequality of Lemma~\ref{lem2a}, we obtain
\[\cL^2\ll    \int_{T-H \leq |\varpi| \leq T} |f(\varpi)|^2 d\mu(\varpi) \, \cdot \, (TH + N)^{1+\varepsilon} \sum_{n\le N}  |\alpha_n|^2\] 
and since 
\[
\sum_{n\le N} |\alpha_n|^2 = \cL ,
\] 
the claim now follows. 
\end{proof} 

The next estimate follows immediately from Lemma~\ref{lem3a} using the function $\varpi\mapsto f(\varpi)\rho_\omega(1)$, followed by Lemma~\ref{lem2a} with $N=1$.

\begin{cor}\label{coro}
Let $N, T \geq 1$, $F \geq 0$ and $1 \leq H \leq T$. Let  $f : \cS \rightarrow \C$ be a measurable function with $|f(\varpi)| \leq F$ for $T-H \leq |\varpi| \leq T$. Then 
\[
\sum_{n \leq N} \left| \int_{T-H \leq |\varpi| \leq T} \rho_{\varpi}(n) \rho_{\varpi}(1) f(\varpi)d\mu(\varpi) \right|^2 \ll F^2 TH \(TH + N\)^{1+\varepsilon}. 
\]
\end{cor}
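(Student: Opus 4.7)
The plan is to follow the hint and apply the dual spectral large sieve (Lemma~\ref{lem3a}) to the measurable function $g(\varpi) = f(\varpi)\rho_{\varpi}(1)$, and then use the ordinary large sieve (Lemma~\ref{lem2a}) to control the resulting $L^2$-integral of $|\rho_{\varpi}(1)|^2$. This is a standard reshuffling of coefficients: we are moving the factor $\rho_{\varpi}(1)$ from the integrand on the left into the $L^2$-norm on the right, where it can be estimated by choosing a sequence supported only at $n=1$.

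In more detail, I would first set $g(\varpi) = f(\varpi)\rho_{\varpi}(1)$ and invoke Lemma~\ref{lem3a} with this choice of $f$, obtaining
\[
\sum_{n\le N} \left| \int_{T-H \leq |\varpi| \leq T} \rho_{\varpi}(n) \rho_{\varpi}(1) f(\varpi)\,d\mu(\varpi) \right|^2 \ll (TH+N)^{1+\varepsilon} \int_{T-H \leq |\varpi| \leq T} |f(\varpi)|^2 |\rho_{\varpi}(1)|^2\,d\mu(\varpi).
\]
Using the pointwise bound $|f(\varpi)|\le F$ in the range $T-H\le |\varpi|\le T$, the last integral is at most
\[
F^2 \int_{T-H \leq |\varpi| \leq T} |\rho_{\varpi}(1)|^2\,d\mu(\varpi).
\]

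To dispatch this remaining integral, I would apply Lemma~\ref{lem2a} with $N=1$ and the one-term sequence $\balpha = (1)$, so that $\sum_{n\le 1}\rho_{\varpi}(n)\alpha(n) = \rho_{\varpi}(1)$ and $\|\balpha\|_2^2 = 1$; this yields
\[
\int_{T-H \leq |\varpi| \leq T} |\rho_{\varpi}(1)|^2\,d\mu(\varpi) \ll (TH+1)^{1+\varepsilon} \ll (TH)^{1+\varepsilon}
\]
since $TH\ge 1$. Combining the two displays gives the bound $F^2(TH+N)^{1+\varepsilon}(TH)^{1+\varepsilon}$, which, after absorbing the small power $(TH)^{\varepsilon}\le (TH+N)^{\varepsilon}$ into a slightly enlarged $\varepsilon$, produces the stated bound $F^2\, TH\,(TH+N)^{1+\varepsilon}$.

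There is no real obstacle here: the whole argument is a two-line duality/Cauchy--Schwarz manipulation that separates the ``signal'' factor $\rho_{\varpi}(n)$ from the ``weight'' factor $\rho_{\varpi}(1)$, so that each can be handled by a separate application of the spectral large sieve. The only minor point to keep track of is that applying Lemma~\ref{lem2a} at $N=1$ produces the factor $TH$ rather than $TH+N$, which is precisely what yields the clean factorised shape of the final bound.
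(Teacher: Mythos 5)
Your proof is correct and follows exactly the paper's own route: apply Lemma~\ref{lem3a} to the auxiliary function $\varpi\mapsto f(\varpi)\rho_\varpi(1)$, then dispatch the resulting integral $\int_{T-H\le|\varpi|\le T}|\rho_\varpi(1)|^2\,d\mu(\varpi)$ by invoking Lemma~\ref{lem2a} with $N=1$. No discrepancies.
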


\subsection{Proof of Proposition~\ref{prop:Kpm}}
\label{sec:proof}

 We define
\[V_{n, \Delta}(x) = W_{\Delta}\(x \frac{X}{4\pi \sqrt{n}}\),
\]
where $W_{\Delta}$ is as in~\eqref{w-mellin} and the surrounding discussion, and we write
\[
\sK^{(2)}_{\ast}  (N;X)  =   \sum_\pm\sum_{N \leq n <  2N} \left|\sum_{c=1}^\infty \frac{1}{c} \cK(n, \pm 1, c)V_{n, \Delta}\(4\pi  \frac{\sqrt{n}}{c}\)\right|^2.
\]
We assume generally that $\Delta \geq (NX)^{-100}$. 
By  the Weil bound~\eqref{eq: Kloost},  and using $\mathbf{1}_{X\leq c<2X}$ for the characteristic function of the interval $[X,2X)$, it is easy to see that 
\begin{align*}
 \left|\sum_{X\leq c < 2X}   \frac{1}{c} \cK(n, \pm 1, c)\right|^2
&  \le 2  \left|\sum_{c=1}^{\infty}   \frac{1}{c} \cK(n, \pm 1, c) \(\mathbf{1}_{X\leq c<2X}- V_{n, \Delta}\(4\pi  \frac{\sqrt{n}}{c}\)\) \right|^2\\
& \qquad \qquad \qquad + 2  \left|\sum_{c=1}^{\infty}   \frac{1}{c} \cK(n, \pm 1, c)  V_{n, \Delta}\(4\pi  \frac{\sqrt{n}}{c}\)\ \right|^2\\
&  \preccurlyeq   \Delta^2 X + \left|\sum_{c=1}^\infty \frac{1}{c} \cK(n, \pm 1, c)V_{n, \Delta}\(4\pi  \frac{\sqrt{n}}{c}\)\right|^2 . 
\end{align*}  
Therefore, 
\begin{equation}\label{weil}
\sK^{(2)}  (N;X)  \preccurlyeq  N\Delta^2 X+  \sK^{(2)}_{\ast} (N;X) .
\end{equation} 
We proceed to estimate $\sK^{(2)}_{\ast} (N;X)$. To this end, let 
\[V(x) = W\(x \frac{X}{4\pi \sqrt{N}}\), \]
where $W$ is a fixed smooth function with support in $[1/100, 100]$ and $W(x) = 1$ on $[1/50, 50]$. We   insert this as a redundant factor and write
\[
\sK_*^{(2)} (N;X)= \sum_{\pm} \sum_{N \leq n <  2N}   \left|\sum_{c=1}^\infty \frac{1}{c} \cK(n, \pm 1, c)V_{ n, \Delta}\(4\pi  \frac{\sqrt{n}}{c}\)V\(4\pi  \frac{\sqrt{n}}{c}\)\right|^2.
\]
The inner sum is in shape to apply the Kuznetzov formula given by Lemma~\ref{lem2a},  
  and   we obtain  
\[
\sK^{(2)}_* (N;X) = \sum_{\pm} \sum_{N \leq n <  2N}    \left|  \int_{\cS^{\pm}}  \rho_{\varpi}(1) \rho_{\varpi}(n) \widecheck{(V_{n, \Delta} V)}^{\pm}(\varpi) d\mu(\varpi) \right|^2.
\]
We define 
\[
U(r,\varpi)^{\pm}  =   \int_0^{\infty} V(x) x^{ir} \cJ^{\pm}_{\varpi}(x) \frac{dx}{x}
\]
and 
Mellin-invert $V_{n, \Delta}$, getting  
\begin{align*}
 \sK^{(2)}_{\ast} ( N;X)& \ll \sum_{\pm }  \sum_{N \leq n <  2N}\left|  \int_{\R}\frac{(1 + |r|\Delta)^{-10}}{1+|r| }   \left|  \int_{\cS^{\pm}}  \rho_{\varpi}(1) \rho_{\varpi}\(n\)
U(r,\varpi)^{\pm}
 d\mu(\varpi) \right| dr \right|^2\\
 &  \preccurlyeq \sum_{\pm }  \sum_{N \leq n <  2N}  \int_{\R}\frac{(1 + |r|\Delta)^{-10}}{1+|r| }   \left|  \int_{\cS^{\pm}}  \rho_{\varpi}(1) \rho_{\varpi}\(n\)
U(r,\varpi)^{\pm}
 d\mu(\varpi) \right|^2 dr ,  \end{align*} 
 where the second step is an application of the Cauchy--Schwarz inequality.

We   split the set $\cS^{\pm}$ into countably many  pieces 
\[
\cS^{\pm} = \bigcup_{j=1}^\infty \cS^{\pm}_j
\] 
according to the size of $\varpi$ and also depending on $r$, to be described in a moment, and to each subset $\mathcal{S}_j^{\pm}$ we attach a weight $\omega_j^{\pm} > 0$ also to be determined in a moment. For now we just require  that the series  
\[\Omega^{\pm}(r) = \sum_{j=1}^{\infty} \omega_j^{\pm} < \infty
\]
converges, and we  choose them later in a way that 
\begin{equation}\label{small}
  \Omega^{\pm}(r) \ll ((1+|r|) NX)^{\varepsilon}.
\end{equation}
with the implied constant depending only on an arbitrary small   $\varepsilon >0$.
Again by the Cauchy--Schwarz inequality, we obtain
\begin{equation}\label{kast}
\sK^{(2)}_{\ast} (N;X)   \preccurlyeq \sum_{\pm} \int_{\R}\frac{(1 + |r|\Delta)^{-10}}{1+|r| }  \Omega^{\pm}(r) \Sigma_d^\pm(r)  dr, 
\end{equation} 
where 
\[
 \Sigma_d^\pm(r) = \sum_{j=1}^\infty (\omega_j^{\pm})^{-1}  \sum_{N \leq n < 2N}  \left|  \int_{\cS^{\pm}_j}  \rho_{\varpi}(1) \rho_{\varpi}(n) U(r,\varpi)^{\pm}  
 d\mu(\varpi) \right|^2 . 
\]

This expression is in shape for an application of Corollary~\ref{coro}. 
The integral transform $ U(r,\varpi)^{\pm} $
 has been estimated in Lemma~\ref{analysis}, and accordingly we distinguish the cases 
\[
\varpi = k - \tfrac{1}{2} \in \N - \tfrac{1}{2} \mand \varpi = it \in i\R.
\]

We start with the case $\varpi = k - \tfrac{1}{2}$ and split into dyadic ranges $T_{\nu} \leq k < 2 T_{\nu}$, where 
$T_\nu = 2^\nu$, $\nu = 0,1, \ldots $. 
Using Lemma~\ref{analysis} with  
$A = 10$ and $Z = \sqrt{N}/X$ and Corollary~\ref{coro} with $T/2=H=T_\nu$  and 
\[
F \ll \frac{1}{T_\nu}\frac{1}{(1+T_\nu/(\sqrt{N}/X))^{10}},
\] 
we  bound, up to $(TN)^{o(1)}$, this portion  of $  \Sigma_d^+(r)$ by 
\begin{equation}\label{eq:bound k-1/2}
\sum_{\nu=0}^{\infty}(\omega_{\nu}^+)^{-1} \frac{1}{T_\nu^2} \(1 + \frac{T_\nu}{\sqrt{N}/X}\)^{-20} T_\nu^2(T_\nu^2 + N) \preccurlyeq N
\end{equation}
upon choosing $\omega_{\nu}^+ =  \(1 + \frac{T_\nu}{\sqrt{N}/X}\)^{-10}$, say, which satisfies the condition~\eqref{small}. 

Next we consider the case $\varpi = it \in i\R$ using again Lemma~\ref{analysis} with  $Z = \sqrt{N}/X$. We split into smaller windows $H_{\nu} \leq 1 + \left||r| - 2|t|\right | <  2H_\nu$ where 
\[
H_\nu  = 2^\nu \mand 
T_\nu = H_\nu + |r|, \qquad \nu = 0,1, \ldots.
\]
In this region we have 
\[
t\leq T_\nu \mand  1/(1+|r|+|2t|)\leq 2/T_\nu
\] 
and  
the set of $t$ satisfying these conditions is contained in the union of at most 4 windows of size at most $H_\nu$.
 With $Z = \sqrt{N}/X$ and 
 \[
 F \ll \frac{1}{(T_vH_v)^{1/2}}\left(1+\frac{H_\nu}{1+2Z^2/T_\nu}\right)^{-10}
 \] we bound as before  (and again up to  $(TN)^{o(1)}$)  this portion  
 of $  \Sigma_d^{\pm}(r)$ by
\begin{equation}\label{bound it}
\begin{split}
 \sum_{\nu=0}^\infty  &(\omega_{\nu}^{\pm})^{-1}  \frac{1}{T_\nu H_\nu }  \(1 + \frac{H_\nu }{1+Z^2/T_\nu }\)^{-20} T_\nu H_\nu (T_\nu H_\nu+ N) \\
&\ll   \sum_{\nu=0}^\infty    \(1 + \frac{H_\nu}{1+Z^2/(H_\nu+ |r|)}\)^{-10}  \((H_\nu+|r|)H_\nu +  N \)  \\
& \preccurlyeq    \(1 + |r|^{1+\varepsilon} + Z^2 + N\) \ll  1 + |r|^{1+\varepsilon} + N
\end{split}
\end{equation} 
upon choosing $\omega_{\nu}^{\pm} =    \(1 + \frac{H_\nu }{1+Z^2/T_\nu }\)^{-10}$, say, which satisfies~\eqref{small}
(we still allow the implied constants to depend on $\varepsilon>0$).

We substitute~\eqref{eq:bound k-1/2} and~\eqref{bound it} back into~\eqref{kast} getting
\[
\sK^{(2)}_{*} ( N;X) 
 \preccurlyeq   \int_{\R}\frac{\(1 + |r|\Delta\)^{-10}}{1+|r| } (1 + |r|^{1+\varepsilon} + N)dr \preccurlyeq N + \Delta^{-1}. 
\]

Together with~\eqref{weil} we finally obtain
\[
\sK^{(2)}  (N;X)  \preccurlyeq N+\Delta^{-1} + N\Delta^2 X\\
  \ll  N +  (NX)^{1/3} 
\]
upon choosing $\Delta = (NX)^{-1/3}$, which balances the two last terms. This finishes the proof of Proposition~\ref{prop:Kpm}
and thus of Theorem~\ref{thm:second},  and as such also of Theorem~\ref{thm:Triple Sum}.

\section{Bounding the uniformity of distribution measures}

\subsection{Proof of Theorem~\ref{thm:Discr}}   
From~\eqref{eq: NX-total} and Lemma~\ref{lem:K-S} we see, using a dyadic decomposition, that for any integer $1 \le \ell \ll \log X$ 
we have
\begin{equation}
\label{eq: DX SS}
\Delta(X, \cB)  \ll  2^{-\ell} +   \frac{1}{X^2}  \(S_*+S_0\), 
\end{equation}
where 
\[
 S_* = \sum_{i,j=0}^\ell \fK(2^i, 2^j, X) 2^{-i-j} \mand
 S_0= \sum_{i=0}^\ell  \sum_{2^i \le m <2^{i+1}} \left|\sum_{c  \le X} \cK(0, m; c) \right| 2^{-i} .
\]
 
 By Theorem~\ref{thm:Triple Sum} and Remark~\ref{rem:sumRaman} we have 
\[\Delta(X, \cB)\preccurlyeq  2^{-\ell}+X^{-2}(\ell^2X+ X^{7/6}).
\]
Taking $\ell = \rf {\log X/\log 2} $ in~\eqref{eq: DX SS} we   conclude the proof of Theorem~\ref{thm:Discr}.

\subsection{Proof of Theorem~\ref{thm:SmallBox}}
 We choose two  parameters $M,N \in \mathbb{N}$ such that 
\begin{equation}
\label{eq: Large M N}
\alpha M, \beta N \ge 2,
 \end{equation}
 where $\alpha$ and $\beta$ are the side lengths of the box $B$ as in~\eqref{eq: Box B}.

It   follows from  Theorem~\ref{thm:Triple Sum}   (and Remark~\ref{rem:sumRaman}),  equation~\eqref{eq: NX-total} and Lemma~\ref{lem:BMW-corollary} 
that 
\[
\#(B\cap \cS(X)) = \mu\(B\)N(X)\(1 + O(E)\), 
\]
where 
\begin{equation}
\label{eq: Delta}
E \preccurlyeq  \alpha^{-1}  M^{-1}  + \beta^{-1} N^{-1}  + MNX^{-1} + (MN)^{2/3} X^{-5/6}.
\end{equation}
We balance the first two terms and choose $N =\rf{\alpha  \beta^{-1} M}$.   
We observe that to satisfy~\eqref{eq: Large M N} we now only need to 
request $\alpha  M\ge 2$.

We can certainly assume that $M,N \le X^2$ as otherwise
 the result is   trivial.
Hence we see from~\eqref{eq: Delta}   that 
\[
\Delta  \preccurlyeq \alpha^{-1} M^{-1}  + \alpha  \beta^{-1} M^2 X^{-1} +  
\(\alpha  \beta^{-1}\)^{2/3}M^{4/3}X^{-5/6} . 
\]

 Clearly the above expression minimises at one of the two values of $M$
balancing the first and either the second or the third term. 
That is,  at 
\[
M_1 = \rf{2\alpha^{-2/3} \beta^{1/3}  X^{1/3}}, 
\]
or at 
\[
M_2 = \rf{2\alpha^{-5/7} \beta^{2/7} X^{5/14}}. 
\]  
Choosing  $M=M_1$ leads to the bound
\begin{align*} 
  \Delta  &\preccurlyeq  \alpha^{-1} M_1^{-1} + \(\alpha  \beta^{-1}\)^{2/3}M_1^{4/3}X^{-5/6} \\
  &\preccurlyeq \mu\(B\)^{-1/3} X^{-1/3} +   \mu\(B\)^{-2/9} X^{-7/18}  .
  \end{align*}  
  We easily verify that for $\mu\(B\)\leq 1\leq X$ the first term dominates the second, 
which finishes the proof of Theorem~\ref{thm:SmallBox}.

\subsection{Proof of Theorem~\ref{thm:SmallDisc}}
Using Lemma~\ref{lem:Harm} with $d=2$ together with   Theorem~\ref{thm:Triple Sum} and a dyadic partition of unity as in the proof of Theorem \ref{thm:Discr}, we conclude  for any integer $L$ with  $X  \ge L \ge 1$ that
\begin{align*}
&\left|  \frac{\#(D \cap \cS(X)) - \pi R^2}{N(X)} \right| \\
&\quad\preccurlyeq \max_{1 \leq M \leq N \leq L} \left\{ \frac{R}{L} + \frac{1}{L^2} + \frac{1}{X^2} \( \frac{1}{L^2} + \min\left\{R^2, \frac{R^{1/2}}{N^{3/2}}\right\}\) \(NMX + (NM)^{2/3}X^{7/6}\)\right\}\\
&\quad \preccurlyeq \max_{1  \leq N \leq L}\left\{ \frac{R}{L} + \frac{1}{L^2} + \frac{1}{X^2} \( \frac{1}{L^2} + \min\left\{R^2, \frac{R^{1/2}}{N^{3/2}}\right\}\) \(N^2X + N^{4/3}X^{7/6}\)\right\}\\
&\quad \preccurlyeq   \frac{R}{L} + \frac{1}{L^2} + \frac{1}{X} + \frac{1}{L^{2/3}X^{5/6}}+ \min\left\{R^2, \frac{R^{1/2}}{L^{3/2}}\right\}\frac{L^2}{X}  +  \min\left\{R^{2} L^{4/3}, R^{2/3}\right\}\frac{1}{X^{5/6}}. 
\end{align*}
We choose 
\[
L = \fl{X^{1/2} + X^{2/3} R^{1/3}}
\]
so that our bound becomes
\[
\left|  \frac{\#(D \cap \cS(X)) - \pi R^2}{N(X)} \right|  \preccurlyeq \begin{cases} R^{2/3}X^{-2/3} , & R \geq X^{-1/2},\\ X^{-1} , & R \leq X^{-1/2}, \end{cases}
\]
as claimed.

\subsection{Proof of Theorem~\ref{thm:Isotrop Discr}} 

Recall that we say that a set $C \subseteq \(\R/\Z\)^2$ is convex if there exist a   convex set $\Omega\subseteq \sfU = [0, 1)^2$ such that $\Omega\bmod \Z^2= C$.
 Similarly, given $\eta>0$, we say that $\Delta$ is $\eta$-well-shaped if there exist an $\eta$-well-shaped $\Omega\subseteq \sfU$ as defined in 
 Section~\ref{sec:approx-alternative} satisfying $\Omega\bmod \Z^2=\Delta$.

For the proof of Theorem~\ref{thm:Isotrop Discr} it is technically easier to establish a more general result 
 for the discrepancy 
\[
\Delta_\eta(X) =  \sup_{S,~\eta\text{-well-shaped}} \left|\frac{\#(S \cap \cS(X))}{N(X)}   - \mu(S)\right|
\]
with respect to $\eta$-well-shaped sets $S \subseteq \(\R/\Z\)^2$.
This is indeed a more general result since, by Lemma~\ref{lem:convex to well}, any convex set in $\(\R/\Z\)^2$ is $(4+\pi)$-well-shaped 
so $\Delta(X, \cC) \leq \Delta_{4+\pi}(X)$. 

We  show that 
\begin{equation}\label{well-shaped-discrepancy}
  \Delta_\eta(X) \preccurlyeq \eta X^{-11/24 }, 
\end{equation}
uniformly over $\eta$. Clearly if $\eta$ is fixed (like in the above scenario relevant to bounding 
$\Delta(X, \cC)$), the bound~\eqref{well-shaped-discrepancy} simplifies as $  \Delta_\eta(X)  \preccurlyeq  X^{-11/24}$. However, for 
completeness and keeping in mind that there could be other applications to $\eta$-well-shaped sets 
where  $\eta$ may depend on $X$, we derive the result in full generality.

To prove~\eqref{well-shaped-discrepancy} we note that for $\Omega\subseteq \sfU$ we have, by definition, that  $(\sfU\backslash \Omega)_\varepsilon^\pm=\Omega_\varepsilon^\mp$, using the notation from Section~\ref{sec:approx-alternative}.  It follows that the complement of an $\eta$-well-shaped set $S \subseteq \(\R/\Z\)^2$ is again an $\eta$-well-shaped set. We therefore conclude, using $\#(S^c \cap \cS(X)) =N(X)-\#(S \cap \cS(X))$, that in order to establish 
 Theorem~\ref{thm:Isotrop Discr} it is enough to prove a lower bound 
 \begin{equation}
\label{eq: LB NX} 
\#(S  \cap \cS(X))  \ge  \mu(S) N(X) + O\(\eta X^{37/24+o(1)}\)
\end{equation}
for such sets.

We start by approximating an $\eta$-well-shaped set $S$ by boxes in $\cB_i$ as described in 
Section~\ref{sec:approx-alternative};~ cf.\  in particular~\eqref{B's-cover}. From the third inclusion in~\eqref{eq: inclusions} combined with the fact that rational points are in at most in one of the boxes $B_i$ we see that 
\[
\#(S  \cap \cS(X)) \ge \sum_{i=1}^M  \sum_{\Gamma \in \cB_i}\#(\Gamma\cap \cS(X)). 
\]
Here we interpret $\Gamma$ as having been mapped to $\(\R/\Z\)^2$.
We want to apply Theorem~\ref{thm:Discr} to the larger of these boxes, and Theorem~\ref{thm:SmallBox} to the smaller ones. We first assume  $M \leq \log X/(2\log(2))$ such that $\cB_M$ contains boxes that are allowed in Theorem~\ref{thm:SmallBox}. 
  We then choose another positive parameter $L \le M$ and apply 
Theorem~\ref{thm:Discr} to squares $\Gamma \in \cB_i$ with $i \le L$ 
and Theorem~\ref{thm:SmallBox} otherwise. In this way we derive 
 \begin{equation}
\label{eq: N partition}
\#(S  \cap \cS(X)) \ge N(X)  \sum_{i=1}^M  \sum_{\Gamma \in \cB_i} \mu (\Gamma) + O\(R X^{2+o(1)}\), 
\end{equation}
where 
\[
R =   \sum_{i\le L}  \# \cB_i  X^{-5/6} +    \sum_{L < i\le M}  \# \cB_i   \(2^{-2i}\)^{2/3}  X^{-1/3}. 
\]

Recalling the bound~\eqref{eq: bound B} on $ \# \cB_i $, we see that 
 \begin{equation}
 \label{eq: bound R}
R    \ll \eta (2^L X^{-5/6} +  2^{-L/3} X^{-1/3}) . 
\end{equation}
Furthermore, by~\eqref{eq:sum of Bi} we have 
 \begin{equation}
\label{eq: mu sum mu}
\sum_{i=1}^M  \sum_{\Gamma \in \cB_i} \mu (\Gamma) = \mu(S) + O\(\eta  2^{-M}\). 
\end{equation}
Putting these estimates together in~\eqref{eq: N partition} we find 
\[
\#(S  \cap \cS(X)) \ge N(X)\mu(S) + O(\eta X^{2+o(1)}(2^LX^{-5/6}+2^{{-L/3}}X^{-1/3}+2^{-M})).
\]

 We balance  the two first terms and choose 
 \[
 L=\fl{\frac{3\log X}{8\log 2}}.
 \]
  Then these two terms are of order $X^{37/24+o(1)}$. Choosing $M$ to satisfy 
\[
 \frac{11\log X}{24\log 2} \leq M\leq \frac{\log X}{2\log 2},
\] 
we see with this choice $L<M$ and $X^2/2^M$ is bounded by $X^{37/24+o(1)}$. This gives the desired lower bound~\eqref{eq: LB NX}, and therefore concludes the proof of~\eqref{well-shaped-discrepancy}.

 \section*{Acknowledgements} 
 
The authors are grateful to Hugh Montgomery for his suggestion to use~\cite{BMV}  and to Christoph Aistleitner and Yiannis N. Petridis 
for very useful discussions. The authors are also grateful to the referee for the very careful reading of the 
manuscript and several very helpful suggestions.

This material is based upon work supported by the Swedish Research Council under grant no.~2021-06596 while the authors were in residence at Institut Mittag-Leffler in Djursholm, Sweden, during the programme \lq Analytic Number Theory\rq{} in January-April of 2024.

During the preparation of this work   V.B.\ was supported in part by Germany's Excellence Strategy grant EXC-2047/1 -- 390685813, by SFB-TRR 358/1 2023 -- 491392403 and by  ERC Advanced Grant 101054336. 
M.R.\ was supported by grant DFF-3103-0074B from the Independent Research Fund Denmark. 
I.S.\ was  supported by the Australian Research Council Grants  DP230100530 and DP230100534 and by a 
Knut and Alice Wallenberg Fellowship.

\end{document}